\long\def\symbolfootnote[#1]#2{\begingroup%
\def\thefootnote{\fnsymbol{footnote}}\footnote[#1]{#2}\endgroup}
\newcommand{\tr}{\ensuremath{{}^t\!}}
\newcommand{\Aut}{\textup{Aut}}
\def \diag {\mathrm{diag}}
\def \GL {\mathrm{GL}}
\def \SL {\mathrm{SL}}
\def \U {\mathrm{U}}
\def \SU {\mathrm{SU}}
\def \Sp {\mathrm{Sp}}
\def \Int {\mathrm{Int}}
\def \G {\mathrm{G}}
\newtheorem{theorem}{Theorem}[section]
\newtheorem{lemma}[theorem]{Lemma}
\newtheorem{corollary}[theorem]{Corollary}
\newtheorem{proposition}[theorem]{Proposition}
\newtheorem*{theorem*}{Theorem}
\theoremstyle{definition}
\newtheorem{remark}[theorem]{Remark}
\newtheorem{definition}[theorem]{Definition}
\numberwithin{equation}{section}
\newcommand{\ignore}[1]{}
\newcommand{\mynote}[1]{}
\newcommand{\secref}[1]{Section~\ref{#1}}
\newcommand{\lemref}[1]{Lemma~\ref{#1}}
\newcommand{\corref}[1]{Corollary~\ref{#1}}
\begin{document}

\setcounter{section}{0}
\setcounter{tocdepth}{1}
\title[Chirality and Non-real elements in $\G_2(q)$]{Chirality and Non-real elements in $\G_2(q)$}
\author[Sushil Bhunia]{Sushil Bhunia}
\author[Amit Kulshrestha ]{Amit Kulshrestha }
\author[Anupam Singh]{Anupam Singh}
\address{Department of Mathematics, BITS-Pilani, Hyderabad Campus, Hyderabad, India}
\email{sushilbhunia@gmail.com; sushil.bhunia@hyderabad.bits-pilani.ac.in}
\address{IISER Mohali, Knowledge City, Sector 81, Mohali 140 306, India}
\email{amitk@iisermohali.ac.in}
\address{IISER Pune, Dr. Homi Bhabha Road, Pashan, Pune 411 008, India}
\email{anupamk18@gmail.com}
\thanks{The first-named author is funded by SERB grant MTR/2023/001279 and BITS Pilani NFSG grant NFSG/HYD/2023/H0853 for this research. 
The second-named author thanks 
DST-FIST facility established through grant SR/FST/MS-I/2019/46 to support this research.
The third-named author is funded by an NBHM research grant 02011/23/2023/NBHM(RP)/RDII/5955 for this research.}
\dedicatory{In memory of Professor Nikolai Vavilov}
\subjclass[2020]{20D05, 20F10}
\keywords{Chirality, word maps, $\G_2(q)$, non-real elements}
\today
\begin{abstract}
In this article, we determine the non-real elements --- the ones that are not conjugate to their inverses --- in the group $G=\G_2(q)$ when $char(\mathbb F_q)\neq 2, 3$. We use this to show that this group is chiral; that is, there is a word $w$ such that $w(G)\neq w(G)^{-1}$. We also show that most classical finite simple groups are achiral.  
\end{abstract}
\maketitle 

\section{Introduction}
Let $w$ be a word in the free group $F_d$. For any group $G$, we have the following map called \emph{word map} $w\colon G^d\rightarrow G $
given by evaluation $(g_1, g_2, \ldots, g_d) \mapsto w(g_1, g_2, \ldots, g_d)$. The image $w(G^d)$ of $w$ is simply denoted by $w(G)$.
Some of the general questions in the subject are about image sizes, fibers, and Waring-like problems. Several fundamental results have been proved for finite simple and quasisimple groups (see, for example,~\cite{LST11, GLOST18}). In~\cite{Lu14}, Lubotzky characterized that exactly those subsets of a finite simple group that are invariant under automorphisms of the group and contain identity are images of some word. For nilpotent groups, this need not be true, which is studied in~\cite{KKK22}. In this article, we explore another notion based on properties of images of word maps, namely chirality, for some finite groups of Lie type. 

A group $G$ is said to be \emph{chiral} if there is some word $w$ such that $w(G)\neq w(G)^{-1}$. Otherwise, $G$ is said to be \emph{achiral}. A word $w$ for which $w(G)\neq w(G)^{-1}$ is said to be a \emph{witness of chirality}. All Abelian groups are achiral since word maps (equivalently, power maps) are homomorphisms. In~\cite{ch19}, it is shown that the Mathieu group $M_{11}$ is chiral, and a witness to chirality is obtained. In~\cite{ch18}, the chirality of certain nilpotent groups is explored. In this article, we aim to study the property of chirality for the alternating groups, finite classical groups, and some exceptional groups. This problem is closely related to the problem of reality (see Lemma~\ref{techlemma}) and the understanding of automorphisms for these groups. In Section~\ref{classical}, we see that most classical groups are achiral. 

However, when we look at exceptional groups, it turns out $\G_2(q)$, when $char(\mathbb F_q)\neq 2, 3$, is chiral for certain $q$. More precisely (see Corollary~\ref{chiral-g2}) the group $\G_2(q)$ is chiral if and only if $q$ satisfies one of the following conditions:
\begin{enumerate}
\item the field $\mathbb F_q$ has a primitive $3^{rd}$ root of unity and doesn't have a primitive $9^{th}$ root of unity, i.e., $q \equiv 1 \mod 3$ and $q \not\equiv 1 \mod 9$.
\item $\mathbb F_{q^2}^1$ (the group of elements with norm $1$) contains a primitive $3^{rd}$ root of unity and $\mathbb F_{q^2}$ doesn't have $9^{th}$ primitive root of unity, i.e., $q \equiv 2 \mod 3$ and $q^2 \not\equiv 1 \mod 9$.
\end{enumerate}
Due to Lemma~\ref{techlemma}, the search for chirality requires identifying non-real elements in this group which are not fixed by field automorphisms. It turns out that there is a pair of non-real conjugacy classes, that is, there are two non-real conjugacy classes when they exist (depending on $q$). In Section~\ref{G2q}, first, we determine the non-real conjugacy classes which are of independent interest (see Theorem~\ref{maintheorem}). The study of real conjugacy classes done in~\cite{ST05, ST08} reduces our task to finding isolated matrices (see Definitions~\ref{slisolated} and~\ref{suisolated}) in $\SL_3(q)$ and $\SU_3(q)$. Then, we use this to establish that $\G_2(q)$ is chiral (Corollary~\ref{chiral-g2}). We hope to explore other exceptional groups in the future for chirality, too.   

\subsection*{Acknowledgement} We thank Professor Kunyavskii for his wonderful series of lectures in an online ``NCM workshop on Finite groups of Lie type" held in 2021, which motivated us to take up this work. 

\section{Preliminaries}

We study the property of chirality for some finite simple groups of Lie type, which is closely related to the problem of reality. An element $g\in G$ is \emph{real} if $g^{-1}=xgx^{-1}$ for some $x\in G$, and $G$ is called \emph{real} if every element of $G$ is real. 
First, we recall a few basic results to build criteria to test chirality. We begin with recalling~\cite[Lemma 2.1]{ch18}. 
\begin{lemma}\label{techlemma}
Let $G$ be a group such that for every $g\in G$ there is a homomorphism $\varphi\colon G\rightarrow G$, that may depend on $g$, such that $\varphi(g)=g^{-1}$. Then $G$ is achiral.
\end{lemma}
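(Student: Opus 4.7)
The proof should be short and essentially reduces to the fact that word maps commute with homomorphisms. The plan is to show that under the hypothesis, $w(G)^{-1}\subseteq w(G)$ for every word $w$, which (by taking inverses) implies $w(G)=w(G)^{-1}$.

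First I would fix an arbitrary word $w\in F_d$ and an arbitrary element $h\in w(G)$, so that $h=w(g_1,\ldots,g_d)$ for some tuple $(g_1,\ldots,g_d)\in G^d$. The goal is to exhibit $h^{-1}$ as an element of $w(G)$. Applying the hypothesis to the specific element $g=h$, I obtain a homomorphism $\varphi\colon G\to G$ (depending on $h$) satisfying $\varphi(h)=h^{-1}$.

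The key observation is that word maps are natural with respect to homomorphisms: for any homomorphism $\varphi$ and any tuple,
\[
\varphi\bigl(w(g_1,\ldots,g_d)\bigr)=w\bigl(\varphi(g_1),\ldots,\varphi(g_d)\bigr),
\]
since $w$ is built from multiplications and inversions, both of which $\varphi$ preserves. Combining this with $\varphi(h)=h^{-1}$ yields
\[
h^{-1}=\varphi\bigl(w(g_1,\ldots,g_d)\bigr)=w\bigl(\varphi(g_1),\ldots,\varphi(g_d)\bigr)\in w(G),
\]
so $w(G)^{-1}\subseteq w(G)$, and taking inverses gives the reverse inclusion. Since $w$ was arbitrary, $G$ is achiral.

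There is really no substantive obstacle here; the only subtlety worth flagging is that $\varphi$ is allowed to vary with the chosen element $h\in w(G)$, which is crucial since a single homomorphism inverting every element would force $G$ to be abelian. The argument correctly uses the pointwise nature of the hypothesis, applying it only to the particular element $h$ one wishes to invert.
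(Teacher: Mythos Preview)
Your proof is correct. Note, however, that the paper does not actually prove this lemma: it is merely \emph{recalled} from \cite[Lemma~2.1]{ch18}, so there is no proof in the paper to compare against. Your argument---applying the hypothesis to the element $h\in w(G)$ itself and using that word maps commute with endomorphisms---is the standard one and is essentially what appears in the cited reference.
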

The following is an immediate consequence.
\begin{corollary}\label{realch}
If a group $G$ is real, then $G$ is achiral.
\end{corollary}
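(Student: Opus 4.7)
The plan is to apply Lemma~\ref{techlemma} directly, taking the homomorphism $\varphi$ to be an inner automorphism. Since $G$ is real, for every $g \in G$ there exists some element $x = x(g) \in G$ such that $xgx^{-1} = g^{-1}$. I would define $\varphi \colon G \to G$ by $\varphi(h) = xhx^{-1}$, i.e., $\varphi = \Int(x)$. Conjugation by any fixed element of $G$ is a group homomorphism (in fact, an automorphism) from $G$ to itself, and by construction $\varphi(g) = g^{-1}$.

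Thus the hypothesis of Lemma~\ref{techlemma} is verified: for every $g \in G$ we have exhibited a homomorphism $\varphi \colon G \to G$, depending on $g$, satisfying $\varphi(g) = g^{-1}$. Lemma~\ref{techlemma} then yields that $G$ is achiral.

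There is essentially no obstacle here; the proof is a one-line deduction. The only point worth flagging is that Lemma~\ref{techlemma} explicitly allows the homomorphism to depend on the element $g$, which is exactly the flexibility needed because the conjugating element $x$ provided by reality depends on $g$. It is also worth emphasizing, for the reader's benefit, that the proof uses only that $\varphi$ is a homomorphism, not that it is an automorphism, so the corollary would actually follow from a potentially weaker hypothesis of ``reality up to an endomorphism.''
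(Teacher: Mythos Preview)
Your proof is correct and is essentially identical to the paper's own argument: both take $\varphi$ to be the inner automorphism $\Int_x$ where $x$ conjugates $g$ to $g^{-1}$, and apply Lemma~\ref{techlemma}. Your additional remarks about the dependence of $\varphi$ on $g$ and about needing only an endomorphism are accurate observations, though the paper does not make them explicit.
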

\begin{proof}
Since $G$ is real, for every $g$ in $G$ there is some $x \in G$ such that $g^{-1}=xgx^{-1}$. Choose $\varphi$ to be the inner automorphism $\Int_x$ and apply \lemref{techlemma}.
\end{proof}
\noindent In this article, we will provide several examples of achiral groups which are not real groups (for example, see \secref{classical}). Note that even when a group $G$ is not real, as long as its elements are mapped to their inverses by a homomorphism, it is achiral. The following basic lemma will be useful in our further explorations. 
\begin{lemma}\label{ses}
Let $1\longrightarrow N \overset{i}{\longrightarrow} G\overset{\pi}{\longrightarrow} Q\longrightarrow 1$ be a short exact sequence of groups. Then the following hold:
\begin{enumerate}[leftmargin=*]
\item\label{sesgq} If $G$ is achiral, then so is $Q$.
\item\label{sesgn} If $G$ is real then $N$ is achiral.
\end{enumerate}
\end{lemma}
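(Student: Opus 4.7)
The plan is to handle the two parts independently, each by a short argument that leans on the naturality of word maps with respect to homomorphisms, combined (for part (2)) with \lemref{techlemma}.

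For part (1), I would proceed by contrapositive: assume $Q$ is chiral and exhibit a word witnessing chirality of $G$. So take a word $w \in F_d$ with $w(Q) \neq w(Q)^{-1}$. The key observation is that the surjection $\pi \colon G \to Q$ is a group homomorphism, so evaluating $w$ commutes with $\pi$: for any $(g_1,\ldots,g_d) \in G^d$ one has $\pi(w(g_1,\ldots,g_d)) = w(\pi(g_1),\ldots,\pi(g_d))$. Surjectivity of $\pi$ then gives $\pi(w(G)) = w(Q)$, and similarly $\pi(w(G)^{-1}) = w(Q)^{-1}$. If $w(G) = w(G)^{-1}$ held in $G$, applying $\pi$ would force $w(Q) = w(Q)^{-1}$, contradicting the choice of $w$. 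Hence $w(G) \neq w(G)^{-1}$, so $G$ is chiral, which is the desired contrapositive.

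For part (2), assume $G$ is real; I would verify the hypothesis of \lemref{techlemma} for $N$. Given $n \in N$, reality of $G$ furnishes an $x \in G$ with $x n x^{-1} = n^{-1}$. Because $N$ is normal, the inner automorphism $\Int_x$ of $G$ restricts to an endomorphism (in fact automorphism) $\varphi := \Int_x|_N \colon N \to N$, and by construction $\varphi(n) = n^{-1}$. Thus every element of $N$ is inverted by some homomorphism $N \to N$, so \lemref{techlemma} yields that $N$ is achiral.

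Neither step presents a real obstacle. The only subtlety worth flagging is in (2), where normality of $N$ is essential to ensure that conjugation by $x \in G$ (which a priori is only defined on $G$) actually restricts to a self-map of $N$; without normality one would only obtain a homomorphism from $N$ into $G$, which is not what \lemref{techlemma} asks for. In (1), the content is simply that images of word maps are preserved under group homomorphisms, so that chirality is inherited by quotients.
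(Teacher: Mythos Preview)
Your proof is correct and matches the paper's approach. For part~(2) your argument is essentially word-for-word the paper's own proof. For part~(1) the paper simply cites \cite[Proposition~4.1]{ch18} without further comment, whereas you have spelled out the underlying argument (naturality of word maps under surjections); your version is self-contained and is almost certainly what that cited proposition says anyway.
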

\begin{proof}
\begin{enumerate}[leftmargin=*]
\item This follows from \cite[Proposition 4.1]{ch18}.
\item Let $n\in N$ then $n^{-1}=gng^{-1}$ for some $g\in G$ as $G$ is real. Therefore the map $\varphi:N\rightarrow N$ given by $\varphi(x)=gxg^{-1}$ (for all $x\in N$) is an automorphism of $N$ (as $N\lhd G$) such that $\varphi(n)=n^{-1}$. Thus by \lemref{techlemma}, $N$ is achiral.
\end{enumerate}
\end{proof}

\section{Chirality of classical groups}\label{classical}
In this section, we explore various classical groups and show several examples of achiral groups. We note that the automorphisms of groups of Lie type are well known due to the work of Dieudonné~\cite{Di51} and Steinberg~\cite{St60}. We use these automorphisms to show that elements of the group under consideration can be conjugated to its inverse by an automorphism (not necessarily an inner automorphism). 

\begin{proposition}\label{gln}
\begin{enumerate}[leftmargin=*]
\item The alternating group $\mathcal{A}_n$ is achiral.
\item The projective general linear group $\mathrm{PGL}_n(k)$ and its covering groups are achiral. 
\item  If $G$ is a group of type $A_{n-1}$, then $G$ is achiral.
\item The orthogonal group $\mathrm{O}_n(k)$ is achiral. Thus, simple groups of Lie type $B_l$ and $D_l$ are achiral.
\end{enumerate}
\end{proposition}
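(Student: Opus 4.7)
The plan is to apply the three tools \lemref{techlemma}, \corref{realch}, and \lemref{ses} in combination, exhibiting either a real ambient group or an outer automorphism that inverts the given element, and propagating along short exact sequences to cover the simple sections.

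For part (1), I would first recall that the symmetric group $\mathcal{S}_n$ is real, since its conjugacy classes are parametrised by cycle type and $g^{-1}$ has the same cycle type as $g$. Then applying \lemref{ses}(2) to $1 \to \mathcal{A}_n \to \mathcal{S}_n \to \mathbb{Z}/2 \to 1$ immediately yields that $\mathcal{A}_n$ is achiral.

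For parts (2) and (3), the key ingredient is the transpose-inverse automorphism $\tau\colon g \mapsto (g^T)^{-1}$ of $\mathrm{GL}_n(k)$, which restricts to an automorphism of $\mathrm{SL}_n(k)$. Given $g$, the classical fact that every matrix is conjugate to its transpose supplies an $x$ with $\Int_x(g) = g^T$, and then $\varphi := \tau \circ \Int_x$ is an automorphism satisfying $\varphi(g) = \tau(g^T) = g^{-1}$. Hence \lemref{techlemma} yields achirality of $\mathrm{GL}_n(k)$ and $\mathrm{SL}_n(k)$, and \lemref{ses}(1) applied to $\mathrm{GL}_n \twoheadrightarrow \mathrm{PGL}_n$ and $\mathrm{SL}_n \twoheadrightarrow \mathrm{PSL}_n$ descends this to the projective versions. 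This covers (2) together with the split forms of type $A_{n-1}$ in (3). If twisted (unitary) forms are also intended in (3), I would handle them by combining the Frobenius $\sigma$ with an inner automorphism: central elements $\lambda I \in \mathrm{SU}_n(q)$ are inverted by $\sigma$ because $\lambda^{q+1} = 1$ forces $\sigma(\lambda) = \lambda^{-1}$, and the non-central part is addressed using the reality properties of unitary classes, finally propagating to $\mathrm{PSU}_n(q)$ via \lemref{ses}(1).

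For part (4), the plan is to establish that $\mathrm{O}_n(k)$ is real, after which \corref{realch} yields achirality, and the chain $\mathrm{O}_n(k) \supset \Omega_n(k) \supset Z(\Omega_n(k))$ together with \lemref{ses}(2) and \lemref{ses}(1) reaches the simple groups of Lie types $B_l$ and $D_l$. Realness of $\mathrm{O}_n(k)$ would come from the classical Wonenburger-type decomposition: every orthogonal transformation factors as a product of two involutions $g = ab$, and then $a g a^{-1} = b a = g^{-1}$. The main obstacle is precisely this decomposition statement: it is classical in characteristic different from $2$, but orthogonal groups in characteristic $2$ behave quite differently and would need either a separate argument or a careful citation. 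A parallel subtlety arises for the twisted case in (3), where the reality input for unitary groups is more delicate than the transparent transpose-inverse construction used for $\mathrm{GL}_n(k)$.
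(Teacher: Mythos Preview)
Your proof is correct and follows essentially the same route as the paper: reality of $S_n$ plus \lemref{ses}(2) for part~(1), the transpose-inverse automorphism composed with an inner conjugation for parts~(2)--(3), and reality of $\mathrm{O}_n(k)$ plus the \lemref{ses} chain for part~(4). Your hedged digression on twisted forms is unnecessary here --- the paper reads ``type $A_{n-1}$'' as the split case only and handles ${}^2A_{n-1}$ in a separate proposition, where instead of your Frobenius sketch it uses Macdonald's result that conjugacy in $\U_n(q)$ is detected in $\GL_n(q^2)$ to get $g$ conjugate to ${}^tg$ already inside $\U_n(q)$, after which the identical transpose-inverse trick applies.
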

\begin{proof}
\begin{enumerate}[leftmargin=*]
\item  Since $\mathcal{A}_n\lhd S_n$ and $S_n$ is a real group then, in view of  \lemref{ses}\eqref{sesgn}, $\mathcal{A}_n$ is achiral.
\item Let $g\in \GL_n(k)$ then ${}^tg=xgx^{-1}$ for some $x\in \GL_n(k)$. Let $\gamma$ be the automorphism of $\GL_n(k)$ given by $\gamma(y)={}^ty^{-1}$ for all $y\in \GL_n(k)$. 
Then \[\varphi(g)= \Int_{x^{-1}}\circ\gamma(g)=\Int_{x^{-1}}({^t}g^{-1})=(x^{-1}{}^tgx)^{-1}=g^{-1}.\] Thus, by \lemref{techlemma} $\GL_n(k)$ is achiral. Note that in view of \lemref{ses}\eqref{sesgq} $\GL_n(k)/C$  is also achiral, for any central subgroup $C$ of $\mathcal Z(\GL_n(k))$.
\item Let $g\in \SL_n(k)$ then ${}^tg=xgx^{-1}$ for some $x\in \GL_n(k)$. Thus the map 
\[\varphi_{x^{-1}} \colon \SL_n(k)\rightarrow \SL_n(k)\] given by $\varphi_{x^{-1}}(h)=x^{-1}hx$ (for all $h\in \SL_n(k)$) is an automorphism of $\SL_n(k)$. Now, consider the automorphism $\gamma$ of $\SL_n(k)$ given by $\gamma(h)={}^th^{-1}$ for all $h\in \SL_n(k)$. Therefore the automorphism $\psi:=\varphi_{x^{-1}}\circ\gamma$ of $\SL_n(k)$ satisfies the following:
\begin{align*}
\psi(g)=\varphi_{x^{-1}}\circ\gamma(g)=\varphi_{x^{-1}}({}^tg^{-1})=x^{-1}{}^tg^{-1}x=g^{-1}.
\end{align*} 
Hence by \lemref{techlemma} $\SL_n(k)$ is achiral. Which in turn implies that $\SL_n(k)/C$ is also achiral by \lemref{ses}\eqref{sesgq} for any central subgroup $C$ of $\mu_n=\mathcal Z(\SL_n(k))$. In particular, $\mathrm{PSL}_n(k)$ is achiral.
\item Since $\mathrm{O}_n(k)$ is real then $\mathrm{SO}_n(k)$ and its derived subgroup $\Omega_n(k)$ is achiral (by \lemref{ses}\eqref{sesgn}). Therefore by \lemref{ses}\eqref{sesgq}, $\mathrm{P\Omega_n}(k)$ is also achiral.
\end{enumerate}
\end{proof}
We note that by~\cite[Corollary C]{FZ82} $\mathrm{Spin}_{2l+1}(q)$ is achiral. 

\begin{remark}
We don't have to assume that $G$ is finite in the above result.
\end{remark}
Let $\mathbb{F}_q$ be a finite field of odd characteristic. Consider the
unitary group 
$$\mathrm{U}_n(q):=\{g\in \GL_n(q^2)\mid {}^t\overline{g}g=I_n\},$$
and the symplectic group
\[\Sp_{2n}(q):=\{g\in \GL_{2n}(q)\mid {}^tgJg=J\}, \text{ where } J=\begin{pmatrix}0&I_n\\-I_n&0\end{pmatrix}.\] 
The extended symplectic group is $\Sp_{2n}^{\pm}(q):=\{g\in \GL_{2n}(q)\mid {}^tgJg=\pm J\}.$

\begin{proposition}\label{achiral-u}
\begin{enumerate}[leftmargin=*]
\item The unitary group $\mathrm{U}_n(q)$ is achiral.
\item If $G$ is a finite group of type ${}^2A_{n-1}$, then $G$ is achiral.
\end{enumerate}
\end{proposition}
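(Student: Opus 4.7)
The plan is to adapt the strategy used for $\SL_n(k)$ in \propref{gln}: for each $g\in\mathrm{U}_n(q)$ we will exhibit an automorphism $\varphi$ of $\mathrm{U}_n(q)$ with $\varphi(g)=g^{-1}$, and then invoke \lemref{techlemma}. The new ingredient will be the Frobenius automorphism $\sigma\colon\mathrm{U}_n(q)\to\mathrm{U}_n(q)$ given by $\sigma(g)=\bar g$; this is an automorphism because the defining condition $\tr{\bar g}\,g=I_n$ is preserved by the entry-wise $q$-th power. Equivalently, for $g\in\mathrm{U}_n(q)$ one has $\bar g=\tr g^{-1}$, so $\sigma$ is precisely the restriction to $\mathrm{U}_n(q)$ of the graph automorphism $y\mapsto\tr y^{-1}$ of $\GL_n(q^2)$ used in \propref{gln}.

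The crucial input, which plays the role of the classical fact ``every matrix is similar to its transpose'' in the $\SL_n(k)$ proof, will be the following \emph{main claim}: for every $g\in\mathrm{U}_n(q)$ there exists $x\in\mathrm{U}_n(q)$ with $xgx^{-1}=\tr g$. Granting this, $\varphi:=\Int_{x^{-1}}\circ\sigma$ is an automorphism of $\mathrm{U}_n(q)$, and from $\bar g=\tr g^{-1}=(xgx^{-1})^{-1}=xg^{-1}x^{-1}$ one computes
\[
\varphi(g)=x^{-1}\bar g\, x=x^{-1}(xg^{-1}x^{-1})x=g^{-1},
\]
so \lemref{techlemma} yields (1). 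For (2), any finite group $G$ of type ${}^2A_{n-1}$ is a central quotient of $\mathrm{U}_n(q)$ or of $\mathrm{SU}_n(q)$, so by \lemref{ses}\eqref{sesgq} it suffices to show that $\mathrm{SU}_n(q)$ is achiral. Since $\mathrm{SU}_n(q)\lhd\mathrm{U}_n(q)$ and $\sigma$ preserves $\mathrm{SU}_n(q)$, the element $x\in\mathrm{U}_n(q)$ produced by the main claim (applied to any $g\in\mathrm{SU}_n(q)$) induces an automorphism of $\mathrm{SU}_n(q)$ via $\Int_{x^{-1}}$, and the same $\varphi=\Int_{x^{-1}}\circ\sigma$ restricts to give $\varphi(g)=g^{-1}$.

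The main obstacle will be establishing the main claim, which amounts to the assertion that every $\mathrm{U}_n(q)$-conjugacy class is closed under transposition. We would deduce this from the Wall-type parametrization of conjugacy classes in the finite unitary group: a $\mathrm{U}_n(q)$-class is determined by its $\GL_n(q^2)$-class together with certain Hermitian refinement data, both of which are visibly invariant under $g\mapsto\tr g$. It is essential that the Frobenius enter the construction of $\varphi$, because $\mathrm{U}_n(q)$ is not itself a real group in general: a central scalar $\lambda I_n$ with $\lambda^2\neq 1$ is not conjugate to its inverse $\lambda^{-1}I_n$, so \corref{realch} alone cannot be applied.
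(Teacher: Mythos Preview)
Your proposal is correct and follows the same route as the paper: the automorphism you call $\sigma$ coincides with the paper's $\gamma(y)={}^ty^{-1}$ on $\mathrm{U}_n(q)$, and the paper proves your ``main claim'' by invoking Macdonald's result \cite[(6.1)]{Ma81} that two elements of $\mathrm{U}_n(q)$ are conjugate in $\mathrm{U}_n(q)$ if and only if they are conjugate in $\GL_n(q^2)$, which is a cleaner input than the full Wall parametrization you suggest. The rest of the argument, including the passage to $\mathrm{SU}_n(q)$ and its quotients via \lemref{ses}\eqref{sesgq}, is identical.
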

\begin{proof}
\begin{enumerate}[leftmargin=*]
\item Let $g\in \mathrm{U}_n(q)$. Two elements of $\mathrm{U}_n(q)$ are conjugate in $\mathrm{U}_n(q)$ if and only if they are conjugate in $\GL_n(q^2)$ (by \cite[(6.1)]{Ma81}). Since ${}^tg\in \mathrm{U}_n(q)$, and $g$ and ${}^tg$ are conjugate in $\GL_n(q^2)$, we have that ${}^tg=xgx^{-1}$ for some $x\in \mathrm{U}_n(q)$. Thus the automorphism $\psi:=\Int_{x^{-1}}\circ\gamma$ of $\mathrm{U}_n(q)$ satisfies $\psi(g)=g^{-1}$, where $\gamma(y)={}^ty^{-1}$ for all $y\in \mathrm{U}_n(q)$. Hence, by \lemref{techlemma} $\mathrm{U}_n(q)$ is achiral.
\item For any $g\in \mathrm{SU}_n(q)$, the automorphism $\psi:=\varphi_{x^{-1}}\circ\gamma$ of $\mathrm{SU}_n(q)$ satisfies $\psi(g)=g^{-1}$ by similar computation. Thus by \lemref{techlemma} $\mathrm{SU}_n(q)$ is also achiral. This completes the proof by \lemref{ses}\eqref{sesgq}.
\end{enumerate}
\end{proof} 

\begin{proposition}\label{achiral-sp}
 If $G$ is a finite group of Lie type of $C_n$, then $G$ is achiral.
\end{proposition}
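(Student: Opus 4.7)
The plan is to mirror the orthogonal case of Proposition~\ref{gln}(4): first show that the simply connected group $\Sp_{2n}(q)$ is \emph{real}, so that achirality is immediate from Corollary~\ref{realch}; then deduce achirality of the only other isogeny quotient $\PSp_{2n}(q) = \Sp_{2n}(q)/\{\pm I_{2n}\}$ from Lemma~\ref{ses}\eqref{sesgq}. Since together these exhaust the finite groups of type $C_n$ over $\mathbb{F}_q$ (the center of $\Sp_{2n}(q)$ is just $\{\pm I_{2n}\}$, so there are no intermediate central subgroups), this would complete the proof.

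For the reality step I would invoke the classical theorem, due to Wonenburger in odd characteristic and also part of the Feit--Zuckerman classification~\cite{FZ82} already cited in this article, that every element of $\Sp_{2n}(q)$ is a product of two symplectic involutions and, in particular, is conjugate within $\Sp_{2n}(q)$ to its inverse. The extended symplectic group $\Sp_{2n}^{\pm}(q)$ introduced just before the proposition is a natural ambient group in which to exhibit such conjugators, should one wish to write out the argument in the outer-automorphism style used for $\SL_n$ and $\SU_n$ in the preceding propositions.

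The main obstacle, and the reason a separate structural input is genuinely needed here, is that the transpose--inversion trick $\gamma(y) = {}^ty^{-1}$ \emph{collapses} for $C_n$. A direct rearrangement of the defining identity ${}^tyJy = J$ gives ${}^ty^{-1} = JyJ^{-1}$ for every $y \in \Sp_{2n}(q)$, so $\gamma$ restricted to $\Sp_{2n}(q)$ is \emph{already} the inner automorphism $\Int_J$. Consequently one cannot combine $\gamma$ with an inner automorphism to manufacture a fresh map $g \mapsto g^{-1}$, as was done for the $A_{n-1}$ and ${}^2A_{n-1}$ families in Propositions~\ref{gln} and~\ref{achiral-u}; the proof must therefore lean on the honest reality of $\Sp_{2n}(q)$ rather than on a formal manipulation of the contragredient.
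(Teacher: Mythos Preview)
There is a genuine gap: your reality claim for $\Sp_{2n}(q)$ is false when $q\equiv 3\pmod 4$. Already for $n=1$ one has $\Sp_2(q)=\SL_2(q)$, and the unipotent $u=\begin{psmallmatrix}1&1\\0&1\end{psmallmatrix}$ is conjugate to $u^{-1}=\begin{psmallmatrix}1&-1\\0&1\end{psmallmatrix}$ in $\GL_2(q)$ only via elements of determinant $-1$; hence $u$ is not real in $\SL_2(q)$ unless $-1$ is a square, i.e.\ unless $q\equiv 1\pmod 4$. The theorem of Wonenburger you cite does not say what you attribute to it: her result is that every element of $\Sp_{2n}(k)$ (over a field of characteristic $\neq 2$) is a product of two \emph{skew-symplectic} involutions, i.e.\ involutions $s$ with $\tr s J s=-J$. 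These lie in $\Sp_{2n}^{-}(q)$, not in $\Sp_{2n}(q)$, so bireflectionality and reality hold only in the extended group $\Sp_{2n}^{\pm}(q)$. This is exactly the dichotomy recorded in Feit--Zuckerman~\cite[Theorem~E]{FZ82}: $\Sp_{2n}(q)$ is real for $q\equiv 1\pmod 4$, while for $q\equiv 3\pmod 4$ one only has conjugacy of $g$ to $g^{-1}$ inside $\Sp_{2n}^{\pm}(q)$.

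Your own aside about $\Sp_{2n}^{\pm}(q)$ is in fact the missing ingredient, not an optional alternative. The paper's proof splits on $q\bmod 4$: for $q\equiv 1\pmod 4$ (and in characteristic $2$) reality of $\Sp_{2n}(q)$ gives achirality via \corref{realch}; for $q\equiv 3\pmod 4$ one uses that $\Sp_{2n}(q)\lhd\Sp_{2n}^{\pm}(q)$, so the conjugator from $\Sp_{2n}^{\pm}(q)$ furnishes an \emph{automorphism} $\varphi$ of $\Sp_{2n}(q)$ with $\varphi(g)=g^{-1}$, and \lemref{techlemma} applies. Passage to $\PSp_{2n}(q)$ via \lemref{ses}\eqref{sesgq} then finishes, exactly as you outlined.
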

\begin{proof}
First note that if $k$ is a finite field of characteristic $2$, then $\Sp_{2n}(k)$ is real, and hence achiral by \corref{realch}.
If $q\equiv 1\;(\mathrm{mod}\;4)$, then $\Sp_{2n}(q)$ is real (see \cite[Theorem E (i)]{FZ82}), which in turn implies that $\Sp_{2n}(q)$ is achiral by \corref{realch}. Now if $q\equiv 3\;(\mathrm{mod}\;4)$, then by \cite[Theorem E(ii)]{FZ82} every element is conjugate to its inverse by an element of $\Sp_{2n}^{\pm}(q)$. Since $\Sp_{2n}(q)$ is a normal subgroup of $\Sp_{2n}^{\pm}(q)$, then for every element $g\in\Sp_{2n}(q)$ there is an automorphism $\varphi$ (say) of $\Sp_{2n}(q)$ such that $\varphi(g)=g^{-1}$. Hence, by \lemref{techlemma} $\Sp_{2n}(q)$ is achiral in this case, too. Therefore by \lemref{ses}\eqref{sesgq} $\mathrm{PSp}_{2n}(q)$ is also  achiral. 
\end{proof}

\section{Isolated matrices in $\SL_3(q)$}

To study chirality of $\G_2(q)$, we would require to understanding certain types of matrices in $\SL_3(q)$ and $\SU_3(q)$. We do this in this section and the following one. We begin with a definition. 
\begin{definition}\label{slisolated}
A matrix $A \in \SL_3(q)$ will be called \emph{isolated} if $A$ is neither conjugate to $\tr A$,  nor to $A^{-1}$. 
\end{definition}
\noindent Note that being isolated is a property of conjugacy class. The following lemma is an immediate consequence of the definition.
\begin{lemma}
If $A \in \SL_3(q)$ is isolated, then so are $\tr A, A^{-1}$ and $\tr A^{-1}$.  
\end{lemma}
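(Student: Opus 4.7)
My plan is to unpack the definition of isolated and reduce the six non-conjugacy assertions that the lemma requires into just two genuinely new ones, then verify each by a short transposition/inversion manipulation.

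Applying the definition to each of $\tr A$, $A^{-1}$, and $\tr A^{-1}$ produces six statements of the form ``$B$ is not conjugate to $\tr B$'' or ``$B$ is not conjugate to $B^{-1}$''. Using the involutive identities $\tr(\tr B) = B$ and $(B^{-1})^{-1} = B$, four of these collapse to one of the two hypotheses on $A$: for instance, ``$\tr A$ is not conjugate to $\tr(\tr A) = A$'' is the hypothesis verbatim, and ``$A^{-1}$ is not conjugate to $(A^{-1})^{-1} = A$'' is the other hypothesis. Only two genuinely new assertions remain, namely that $\tr A$ is not conjugate to $\tr A^{-1}$, and that $A^{-1}$ is not conjugate to $\tr A^{-1}$.

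I would dispatch each of these by contrapositive. For the first, if $\tr A = X \cdot \tr A^{-1} \cdot X^{-1}$ for some $X \in \SL_3(q)$, then taking the transpose of both sides gives $A = \tr X^{-1} \cdot A^{-1} \cdot \tr X$, exhibiting $A$ as conjugate to $A^{-1}$ in $\SL_3(q)$ (note $\det(\tr X^{-1}) = 1$), which contradicts isolation. For the second, if $A^{-1} = X \cdot \tr A^{-1} \cdot X^{-1}$, then inverting both sides yields $A = X \cdot \tr A \cdot X^{-1}$, contradicting isolation via the other hypothesis.

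I do not expect any real obstacle here: the lemma is formal bookkeeping, and the entire content is that transposition and inversion preserve conjugacy and do not leave $\SL_3(q)$. The only point requiring any care is to check that the conjugating element continues to lie in $\SL_3(q)$ after being transposed or inverted, which is automatic since these operations fix (or invert) the determinant and here the determinant is $1$.
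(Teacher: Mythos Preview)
Your argument is correct and is exactly the routine verification the paper has in mind; the paper itself gives no proof, simply declaring the lemma ``an immediate consequence of the definition,'' and your unpacking of the six conditions into two new ones handled by transposition/inversion is precisely that immediate consequence made explicit.
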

\noindent Before proceeding further, we note the following. To understand the conjugacy class of $A\in \SL_n(q)$, one often examines if its conjugacy class in $\GL_n(q)$ is a union of more than one conjugacy class of $\SL_n(q)$.  Representatives of the classes in the union are conjugates of $A$ by an element of $\GL_n(q)$. This phenomenon is referred to as the \textit{splitting of a conjugacy class}. This is useful in identifying the cases when a matrix is not isolated. This is because $A$ and $\tr A$ are conjugates in $\GL_n(q)$; and if the conjugacy class of $A$ in $\GL_n(q)$ does not split in $\SL_n(q)$ then $A$ would not be isolated. In the case of $\SU_n(q)$, a similar strategy will be used in the next section. 

We recall the Lemma 6.6 and Theorem 6.8 from~\cite{ST05}.
\begin{proposition}\label{prop-norm-condition}
\begin{enumerate}[leftmargin=*]
\item[$(1)$] Let $A\in \SL_n(q)$ and 
$\mathcal Z_{\GL_n(q)}(A)$ be the centralizer of $A$ in $\GL_n(q)$. Let
$N_A = \det(\mathcal Z_{\GL_n(q)}(A))\subset \mathbb F_q^*$.
Then, $g_1Ag_1^{-1}$ is conjugate to $g_2Ag_2^{-1}$ in $\SL_n(q)$, where $g_1, g_2 \in \GL_n(q)$, if and only if $\det(g_1)\equiv \det(g_2) \mod N_A$. 

\item[$(2)$] Let $A\in \SU_n(q)$ and 
$\mathcal Z_{\U_n(q)}(A)$ be the centralizer of $A$ in $\U_n(q)$.
Let $N_A = \det(\mathcal Z_{\U_n(q)}(A)) \subset \mathbb F_{q^2}^1$.
Then, $g_1Ag_1^{-1}$ is conjugate to $g_2Ag_2^{-1}$ in $\SU_n(q)$, where $g_1, g_2 \in \U_n(q)$, if and only if $\det(g_1)\equiv \det(g_2) \mod N_A$.
\end{enumerate}
\end{proposition}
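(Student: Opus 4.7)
The plan is a direct orbit–stabilizer–type calculation: once the condition for the two conjugates to lie in a common $\SL_n(q)$-orbit is reformulated as membership of a certain determinant ratio in a subgroup of $\mathbb{F}_q^\ast$, the statement falls out by bookkeeping.

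For part (1), I would begin by observing that $g_1Ag_1^{-1}$ and $g_2Ag_2^{-1}$ are $\SL_n(q)$-conjugate if and only if there is some $h \in \SL_n(q)$ with
\[
h\,g_1 A g_1^{-1}\,h^{-1} \;=\; g_2 A g_2^{-1},
\]
which rearranges to $g_2^{-1} h g_1 \in \mathcal{Z}_{\GL_n(q)}(A)$. Writing $z := g_2^{-1} h g_1$, the search for $h$ becomes the search for a centralizing $z$ of prescribed determinant: the constraint $\det(h)=1$ forces $\det(z) = \det(g_1)\det(g_2)^{-1}$. Such a $z \in \mathcal{Z}_{\GL_n(q)}(A)$ exists precisely when $\det(g_1)\det(g_2)^{-1} \in N_A$, which is the stated congruence. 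Conversely, given such $z$, I would set $h := g_2 z g_1^{-1}$ and verify directly that $h \in \SL_n(q)$ and that $h$ conjugates $g_1 A g_1^{-1}$ to $g_2 A g_2^{-1}$; both are one-line checks using $zAz^{-1}=A$ and multiplicativity of determinants.

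For part (2), I would run the same argument verbatim with $\U_n(q)$ in place of $\GL_n(q)$ and $\SU_n(q)$ in place of $\SL_n(q)$. A brief sanity check is in order: taking determinants in the defining relation $\tr\overline{g}\,g = I_n$ for $g \in \U_n(q)$ gives $\overline{\det(g)}\det(g)=1$, so $\det(g) \in \mathbb{F}_{q^2}^1$, and therefore $N_A$ is genuinely a subgroup of $\mathbb{F}_{q^2}^1$; the rest of the reasoning carries through unchanged, since the products $g_2 z g_1^{-1}$ and $g_2^{-1} h g_1$ now automatically lie in $\U_n(q)$.

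I do not expect any genuine obstacle. Both parts are instances of the general principle that, inside a normal subgroup $H \lhd G$, the $G$-conjugacy class of $A$ decomposes as an $H$-set via the cokernel of the induced map $\mathcal{Z}_G(A) \to G/H$; in our situation this map is the determinant with values in $\mathbb{F}_q^\ast$ (respectively $\mathbb{F}_{q^2}^1$), and $N_A$ is its image. The only modest care required is to ensure at each step that the auxiliary element constructed actually lies in the correct ambient group, which in both parts is handled by the observations above.
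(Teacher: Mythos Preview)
Your argument is correct and complete: the orbit--stabilizer bookkeeping you describe is exactly how this fact is proved, and the passage from (1) to (2) requires only the observation you make about $\det$ mapping $\U_n(q)$ into $\mathbb{F}_{q^2}^1$.

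The paper, however, does not give its own proof of this proposition at all. It simply recalls the statement from Lemma~6.6 and Theorem~6.8 of \cite{ST05} and uses it as a black box. So there is nothing to compare your approach against within this paper; you have supplied a self-contained proof where the authors instead cite the literature. Your argument is the standard one and is essentially what one finds in \cite{ST05} as well.
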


We use this to establish that most of the matrices in $\SL_3(q)$ are not isolated.   
\begin{lemma}\label{isolated-l1}
Let $A$ be in $\SL_3(q)$. Suppose, its characteristic polynomial $\chi_A(X)$ and
the minimal polynomial $m_A(X)$ do not satisfy $\chi_A(X) = (X - \alpha)^3 = m_A(X)$, where $\alpha^3 = 1$ for some $\alpha \in \mathbb F_q^*$. Then the conjugacy class of $A$ in $\GL_3(q)$ does not split in $\SL_3(q)$. Further, in these cases, $A$ is not isolated. 
\end{lemma}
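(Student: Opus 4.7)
The plan is to verify the first assertion via \propref{prop-norm-condition}(1) by showing that $N_A := \det(\mathcal Z_{\GL_3(q)}(A)) = \mathbb F_q^*$ in every case not excluded by the hypothesis, and then to deduce the second assertion from the standard fact that every matrix is conjugate to its transpose in $\GL_3(\mathbb F_q)$ (they share invariant factors).

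First I would enumerate the rational canonical forms of $A \in \SL_3(q)$ by the pair $(\chi_A, m_A)$: (i) $\chi_A$ is separable over $\bar{\mathbb F}_q$, so $m_A = \chi_A$; (ii) $\chi_A = (X-a)^2(X-b)$ with $a \neq b$ and $m_A = (X-a)(X-b)$; (iii) same $\chi_A$ as in (ii), but $m_A = \chi_A$; (iv) $\chi_A = (X-\alpha)^3$ and $m_A = X-\alpha$, so $A = \alpha I$; (v) $\chi_A = (X-\alpha)^3$ and $m_A = (X-\alpha)^2$; and finally the excluded case (vi) $\chi_A = m_A = (X-\alpha)^3$. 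In each of (i)--(v) I would compute the centralizer and verify $N_A = \mathbb F_q^*$.

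In the semisimple cases (i), (ii), (iv) the centralizer is either a maximal torus or contains a block-diagonal $\GL_2(q) \times \GL_1(q)$ factor, and its determinant fills $\mathbb F_q^*$ using surjectivity of the norm $N\colon \mathbb F_{q^d}^* \to \mathbb F_q^*$ for $d = 2, 3$ when $\chi_A$ has an irreducible factor of that degree. In the cyclic case (iii), the centralizer is $\mathbb F_q[A]^*$ and for $p(X) = c_0 + c_1 X + c_2 X^2$ one has $\det(p(A)) = p(a)^2 p(b)$; choosing $p$ with $p(a)=1,\, p(b)=t$ (possible by the Chinese Remainder Theorem since $a \neq b$) realizes every $t \in \mathbb F_q^*$, so $N_A = \mathbb F_q^*$. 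In case (v), a direct computation with $A - \alpha I$ of Jordan type $(2,1)$ shows that the centralizer consists of invertible matrices whose determinant equals $a^2 d$ for $a, d \in \mathbb F_q^*$ varying independently; since $(\mathbb F_q^*)^2 \cdot \mathbb F_q^* = \mathbb F_q^*$, we again get $N_A = \mathbb F_q^*$.

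By contrast, in the excluded case (vi) the centralizer is still $\mathbb F_q[A]^*$ but now $\det(p(A)) = p(\alpha)^3$, so $N_A = (\mathbb F_q^*)^3$, which is a proper subgroup precisely when $3 \mid q-1$; this is exactly where genuine splitting can occur. Hence in all non-excluded cases \propref{prop-norm-condition}(1) yields the non-splitting. For the second assertion, since $A$ and $\tr A$ have the same invariant factors they are $\GL_3(q)$-conjugate, say $\tr A = g A g^{-1}$ with $g \in \GL_3(q)$; the non-splitting then gives $\det(g) \in N_A = \mathbb F_q^*$ automatically, so $\tr A$ lies in the $\SL_3(q)$-class of $A$, showing $A$ is not isolated. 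The main friction is the case-by-case bookkeeping of centralizers; the most delicate step is case (iii), where the Chinese Remainder Theorem argument is needed to hit every scalar via $p(a)^2 p(b)$.
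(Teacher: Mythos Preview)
Your proposal is correct and follows essentially the same route as the paper: both arguments invoke \propref{prop-norm-condition}(1), run through the possible shapes of $(\chi_A,m_A)$, and show $N_A=\mathbb F_q^*$ in every non-excluded case, then conclude non-isolatedness from the $\GL_3(q)$-conjugacy of $A$ and $\tr A$. Your case analysis is in fact more explicit than the paper's---notably your CRT argument in case~(iii) and your direct computation $\det = a^2d$ in case~(v), where the paper simply asserts the centralizer ``has a block of $\GL_2(q)$''---but the strategy is identical.
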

\begin{proof}
Using Proposition~\ref{prop-norm-condition} we rule out the possibility of $A$ in $\SL_3(q)$ being isolated in all cases except when the characteristic polynomial is equal to the minimal polynomial given by $\chi_A(X) = (X - \alpha)^3 = m_A(X)$, where $\alpha^3 = 1$. We note that $A$ and $\tr A$ are always conjugates in $\GL_3(q)$. Thus, if the conjugacy class of $A$ in $\GL_3(q)$ does not split in $\SL_3(q)$, the matrix $A$ can not be isolated in $\SL_3(q)$. For this we need to compute $N_A=\det(\mathcal Z_{\GL_3(q)}(A))$ and show $N_A = \mathbb F_q^*$, hence the matrix $A$ can not be isolated. If $\chi_A(X)$ is a degree $3$ irreducible polynomial then the $\mathcal 
Z_{\GL_3(q)}(A) = \mathbb F_q[A]^\times \cong \mathbb F_{q^3}^*$. Since the norm map is surjective for finite extensions of finite fields, we are done in this case. When $\chi_A(X) = (X-a)p(X)$ where $p(X)$ is a degree $2$ irreducible then the $\mathcal Z_{\GL_3(q)}(A) = \mathbb F_q[A]^\times \cong \mathbb F_q^* \times \mathbb F_{q^2}^*$ and again $N_A$ is the whole of the set. When $\chi_A(X)$ has at least $2$ distinct roots, we can easily compute the centralizer and show $N_A = \mathbb F_q^*$. This leaves out the case when $\chi_A(X) = (X-\alpha)^3$. The case that $m_A(X)= (X- \alpha)$ or $(X-\alpha)^2$ can be dealt with similarly as the centralizer would be $\GL_3(q)$ in the first case and would have a block of $\GL_2(q)$ in the second case. This completes the proof.
\end{proof}

Thus, to find an isolated matrix $A$ we need to look into the case $m_A(X) = (X-\alpha)^3$ with $\alpha^3 = 1$. 
\begin{lemma}\label{isolated-l2}
Let $A\in \SL_3(q)$ with the minimal polynomial $m_A(X) = (X-\alpha)^3$. Then, the conjugacy class of $A$ in $\GL_3(q)$ splits in $\SL_3(q)$ if and only if $q\equiv 1 \mod 3$. Hence, if $q\not\equiv 1 \mod 3$, the matrix $A$ is not isolated. 
\end{lemma}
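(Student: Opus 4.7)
The plan is to apply Proposition~\ref{prop-norm-condition}(1), so the task reduces to computing $N_A = \det(\mathcal Z_{\GL_3(q)}(A))$ and checking whether it exhausts $\mathbb F_q^*$. The hypothesis $m_A(X) = (X-\alpha)^3$ forces $m_A = \chi_A$ (both are monic of degree $3$ and the minimal polynomial divides the characteristic polynomial), so $A$ is cyclic and may be taken in Jordan form $A = \alpha I + N$ with $N = J_3(0)$ the $3\times 3$ regular nilpotent.

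For a cyclic matrix, the centralizer in $\GL_3(q)$ coincides with $\mathbb F_q[A]^\times$, so every centralising element has the form $aI + bN + cN^2$ with $a \in \mathbb F_q^*$ and $b, c \in \mathbb F_q$. In the standard basis putting $N$ into upper-triangular form, such a matrix is upper-triangular with diagonal $(a,a,a)$, so $\det(aI + bN + cN^2) = a^3$. Taking $b = c = 0$ shows that every cube in $\mathbb F_q^*$ is attained, hence $N_A = (\mathbb F_q^*)^3$.

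Because $\mathbb F_q^*$ is cyclic, $[\mathbb F_q^* : (\mathbb F_q^*)^3] = \gcd(3, q-1)$, which equals $3$ when $q \equiv 1 \mod 3$ and equals $1$ when $q \equiv 2 \mod 3$ (recall the characteristic is coprime to $3$ throughout). By Proposition~\ref{prop-norm-condition}(1), the $\GL_3(q)$-class of $A$ splits in $\SL_3(q)$ if and only if $N_A$ is a proper subgroup of $\mathbb F_q^*$, which happens precisely when $q \equiv 1 \mod 3$. For the final assertion: if $q \not\equiv 1 \mod 3$, the $\GL_3(q)$-class of $A$ does not split, and since $\tr A$ and $A$ are always $\GL_3(q)$-conjugate, they are then also $\SL_3(q)$-conjugate, so $A$ fails to be isolated in the sense of Definition~\ref{slisolated}.

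The argument is mainly bookkeeping. The only step requiring a moment of care is the identification of the centralizer with $\mathbb F_q[A]^\times$, which rests on $A$ being cyclic and is guaranteed by $m_A = \chi_A$; after that, the surjectivity of $a \mapsto a^3$ onto $(\mathbb F_q^*)^3$ is trivial and the splitting dichotomy reduces to the divisibility $3 \mid (q-1)$. I do not foresee any serious obstacle.
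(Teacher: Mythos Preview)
Your proof is correct and follows essentially the same approach as the paper: both compute the centralizer of a regular Jordan block (you via the standard fact that a cyclic matrix has centralizer $\mathbb F_q[A]^\times$, the paper by writing out the explicit matrix form), obtain $N_A = (\mathbb F_q^*)^3$, and conclude via the index $[\mathbb F_q^* : (\mathbb F_q^*)^3] = \gcd(3,q-1)$. The only cosmetic difference is that the paper phrases the dichotomy in terms of $\ker(x\mapsto x^3)$ rather than the index, which is of course equivalent.
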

\begin{proof}
We note that in this case $\chi_A(X) = m_A(X)=(X-\alpha)^3$ the matrix $A$ is conjugate to $\begin{pmatrix} \alpha &1& \\ & \alpha &1 \\ &&\alpha \end{pmatrix}$ in $\GL_3(q)$. Further, its centralizer in $\GL_3(q)$ is given by 
$$\mathcal Z_{\GL_3(q)}(A)= \left\{\begin{pmatrix} a &b&c \\ & a 
&b \\ &&a \end{pmatrix}\mid a\in \mathbb F_q^*, b,c\in \mathbb F_q\right\}.$$ 
Hence, in this case $N_A= (\mathbb F_q^*)^3 \subset \mathbb F_q^* $. Thus, the conjugacy class of $A$ in $\GL_3(q)$ splits into 
$|\frac{\mathbb F_q^* }{(\mathbb F_q^*)^3}| = 1$ or $3$ classes in $\SL_3(q)$. 

Let $\phi \colon \mathbb F_q^* \rightarrow \mathbb F_q^*$ be the group homomorphism given by $x\mapsto x^3$. Note that the conjugacy class of $A$ in $\GL_3(q)$ splits if and only if $|\frac{\mathbb F_q^* }{(\mathbb F_q^*)^3}| \neq 1$, which is equivalent to $\ker(\phi) \neq 1$. This, in turn, is equivalent to $3 \mid (q-1)$. This proves the required result. 
\end{proof}

Let us fix $\alpha \in \mathbb F_q^*$ with $\alpha^3=1$. Such a non-trivial $\alpha$ exists in $\mathbb F_q $ if and only if $q\equiv 1 \mod 3$. This is also a condition required for the splitting of the conjugacy class of $A$. For the matrix $A=\begin{pmatrix} \alpha &1& \\ & \alpha &1 \\ &&\alpha \end{pmatrix}$ the conjugacy class of $A$ in $\GL_3(q)$ splits into the following three classes in $\SL_3(q)$: 
$A$, $A_{1} = \diag(\alpha, 1, 1).A. \diag(\alpha, 1, 1)^{-1} = \begin{pmatrix} \alpha &\alpha& \\ & \alpha &1 \\ &&\alpha \end{pmatrix}$ and $A_{2}=\diag(\alpha^2, 1, 1).A. \diag(\alpha^2, 
1, 1)^{-1} = \begin{pmatrix} \alpha &\alpha^2 & \\ & \alpha &1 \\ &&\alpha \end{pmatrix}$. 

\begin{lemma}\label{isolated-l3}
The matrix $A$ is conjugate to $\tr A$ and hence is not an isolated matrix.
\end{lemma}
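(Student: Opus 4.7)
The plan is to invoke Proposition~\ref{prop-norm-condition}(1) after exhibiting an explicit element of $\GL_3(q)$ that conjugates $A$ to $\tr A$. The natural candidate is the anti-diagonal matrix
\[
J \;=\; \begin{pmatrix} 0 & 0 & 1 \\ 0 & 1 & 0 \\ 1 & 0 & 0 \end{pmatrix},
\]
for which a direct $3\times 3$ computation gives $J A J^{-1} = \tr A$; this is the standard ``reverse the basis order'' trick that interchanges an upper and a lower Jordan block with the same eigenvalue.

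The obstacle to promoting this to an $\SL_3(q)$-conjugacy is that $\det(J) = -1$, so $J \notin \SL_3(q)$. This is precisely the kind of situation that Proposition~\ref{prop-norm-condition}(1) is designed to handle. Taking $g_1 = J$ and $g_2 = I$ in that proposition, $\tr A = J A J^{-1}$ and $A = I \cdot A \cdot I^{-1}$ are $\SL_3(q)$-conjugate if and only if $\det(J) \equiv \det(I) \pmod{N_A}$, i.e.\ if and only if $-1 \in N_A$. The computation of $N_A$ was already carried out in the proof of Lemma~\ref{isolated-l2}: the centralizer $\mathcal Z_{\GL_3(q)}(A)$ consists of upper-triangular matrices with constant diagonal $a$, whence $N_A = (\mathbb F_q^*)^3$. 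Since $-1 = (-1)^3$, certainly $-1 \in (\mathbb F_q^*)^3 = N_A$, and the congruence is satisfied.

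Hence $A$ is conjugate to $\tr A$ in $\SL_3(q)$, and therefore $A$ is not isolated. If one prefers an explicit conjugator, the element $z := -I$ lies in the centralizer of $A$ and has $\det(z) = -1$, so $g := J z = -J \in \SL_3(q)$ satisfies $g A g^{-1} = \tr A$ directly. There is no real difficulty in the argument: the one thing to notice is that $-1$ is automatically a cube in any field, which is what makes the splitting class of $\tr A$ coincide with the class of $A$.
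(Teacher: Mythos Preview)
Your proof is correct and arrives at exactly the same conjugator as the paper: the paper simply writes down the matrix $-J = \begin{pmatrix} & & -1 \\ & -1 & \\ -1 & & \end{pmatrix}\in \SL_3(q)$ and verifies $(-J)A = \tr A\,(-J)$ directly, whereas you first pass through Proposition~\ref{prop-norm-condition} and the observation $-1=(-1)^3\in N_A$ before recovering $-J$ at the end. The detour is harmless, but the one-line verification suffices.
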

\begin{proof} We check that $\begin{pmatrix} & &-1 \\ &-1&\\ -1&&  \end{pmatrix} A=\tr A  \begin{pmatrix} & &-1 \\ &-1&\\ -1&&  \end{pmatrix}$ in $\SL_3(q)$.
\end{proof}
\begin{lemma}\label{isolated-l4}
The matrix $A_1$ (respectively $A_2$) is conjugate to $\tr A_1$ (respectively $\tr A_2$) if and only if $X^3=\alpha^{-1}$ (respectively $X^3=\alpha^{-2}$) has a solution.  
\end{lemma}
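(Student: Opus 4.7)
The plan is to apply the splitting criterion of Proposition~\ref{prop-norm-condition}(1) directly. From Lemma~\ref{isolated-l2} we already know $N_A = \det(\mathcal Z_{\GL_3(q)}(A)) = (\mathbb F_q^*)^3$, and since $A_1 = D_1 A D_1^{-1}$ with $D_1 = \diag(\alpha,1,1)$ the conjugacy class of $A_1$ in $\GL_3(q)$ has the same centralizer data up to conjugation, so the same $N$-set. Hence two elements $h_1 A_1 h_1^{-1}$ and $h_2 A_1 h_2^{-1}$ are $\SL_3(q)$-conjugate iff $\det h_1 \equiv \det h_2 \pmod{(\mathbb F_q^*)^3}$.

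First I would rewrite $\tr A_1$ as an explicit conjugate of $A_1$ by a $\GL_3(q)$-matrix. Since $D_1$ is diagonal, $\tr D_1 = D_1$, so
\[
\tr A_1 = \tr(D_1 A D_1^{-1}) = D_1^{-1}(\tr A)D_1.
\]
Using the matrix $P_0 = \begin{pmatrix} & & -1\\ & -1 & \\ -1 & & \end{pmatrix}$ from Lemma~\ref{isolated-l3}, which satisfies $\tr A = P_0 A P_0^{-1}$ and $\det P_0 = 1$, this gives
\[
\tr A_1 = (D_1^{-1} P_0)\, A\, (D_1^{-1} P_0)^{-1}.
\]
So both $A_1$ and $\tr A_1$ are written as $g A g^{-1}$ with $g = D_1$ and $g = D_1^{-1} P_0$ respectively, of determinants $\alpha$ and $\alpha^{-1}$.

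By Proposition~\ref{prop-norm-condition}(1), $A_1$ and $\tr A_1$ are conjugate in $\SL_3(q)$ if and only if $\alpha \equiv \alpha^{-1} \pmod{(\mathbb F_q^*)^3}$, equivalently $\alpha^2 \in (\mathbb F_q^*)^3$. Using $\alpha^3 = 1$ we have $\alpha^2 = \alpha^{-1}$, so this is the same as saying $\alpha^{-1}$ is a cube in $\mathbb F_q^*$, i.e.\ the equation $X^3 = \alpha^{-1}$ has a solution in $\mathbb F_q$. The argument for $A_2$ is verbatim the same with $D_2 = \diag(\alpha^2,1,1)$: one gets determinants $\alpha^2$ and $\alpha^{-2}$, and the criterion becomes $\alpha^4 = \alpha \in (\mathbb F_q^*)^3$; since $\alpha = \alpha^{-2}$, this is equivalent to $X^3 = \alpha^{-2}$ being solvable.

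There is no real obstacle, as everything reduces to a one-line determinant computation once one writes $A_1$ and $\tr A_1$ as $\GL_3(q)$-conjugates of the same reference matrix $A$. The only slightly subtle point, which deserves a one-line verification at the start of the proof, is that the normalizer constant $N_{A_1}$ (respectively $N_{A_2}$) coincides with $N_A = (\mathbb F_q^*)^3$; this is immediate because $A_1$ is $\GL_3(q)$-conjugate to $A$ and the centralizer determinant set is a conjugation invariant.
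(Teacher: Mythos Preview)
Your argument is correct, but it is a genuinely different route from the paper's. The paper proceeds by brute force: it writes down the linear system $BA_1=\tr A_1\,B$ for an unknown $B=(b_{ij})$, solves it to find that any such $B$ has the shape
\[
B=\begin{pmatrix} && b_{13} \\ & b_{13}\alpha & b_{23}\\ b_{13} & b_{23} & b_{33} \end{pmatrix},
\]
and then imposes $\det B=-b_{13}^3\alpha=1$, which is exactly the solvability of $X^3=\alpha^{-1}$. Your approach instead leverages the machinery already assembled in the paper: you write both $A_1$ and $\tr A_1$ as $\GL_3(q)$-conjugates of the reference Jordan block $A$ (using the explicit $P_0$ from Lemma~\ref{isolated-l3}), and then invoke Proposition~\ref{prop-norm-condition}(1) with $N_A=(\mathbb F_q^*)^3$ from Lemma~\ref{isolated-l2} to reduce the question to a comparison of determinants modulo cubes. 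The paper's method has the minor advantage of exhibiting the conjugating matrix explicitly when it exists; your method is cleaner, avoids any matrix arithmetic, and makes transparent why the cube condition appears. One small remark: your closing comment about $N_{A_1}$ is unnecessary, since you apply the proposition with $A$ (not $A_1$) as the reference element throughout, so only $N_A$ is ever needed.
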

\begin{proof} Let $B=(b_{ij})\in \SL_3(q)$ be such that $BA_1=\tr A_1B$. A simple computation leads to $B=\begin{pmatrix} && b_{13} \\ &b_{13}\alpha & b_{23}\\ b_{13} & b_{23} & b_{33} \end{pmatrix}$ 
with the condition $\det(B)= -b_{13}^3\alpha = 1$. The required condition is equivalent to $X^3=\alpha^{-1}$.  A similar calculation can be done for $A_2$.
\end{proof}

\begin{theorem}\label{isolated-t5}
Let $char(\mathbb F_q)\neq 2, 3$ and $A\in \SL_3(q)$. Then, the isolated matrices in $\SL_3(q)$ exist if and only if (the field $\mathbb F_q$ has a primitive $3^{rd}$ root of unity and doesn't have a primitive $9^{th}$ root of unity) $q \equiv 1 \mod 3$ and $q \not\equiv 1 \mod 9$. Further, there are exactly two conjugacy classes given by the matrices $A_{1} = \begin{pmatrix} \alpha &\alpha& \\ & \alpha &1 \\ &&\alpha \end{pmatrix}$ and $A_{2}= \begin{pmatrix} \alpha &\alpha^2 & \\ & \alpha &1 \\ &&\alpha \end{pmatrix}$ which are isolated. Here, $\alpha$ is a primitive $3^{rd}$ root of unity. Further, it turns out that $A_1$ is conjugate to $\tr A_2$. 
\end{theorem}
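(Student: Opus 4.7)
The plan is to assemble the preceding four lemmas and then push one step further. \lemref{isolated-l1} and \lemref{isolated-l2} together show that any isolated $A\in\SL_3(q)$ must have $\chi_A(X)=m_A(X)=(X-\alpha)^3$ with $\alpha^3=1$, and they require $q\equiv 1\pmod 3$ for the $\GL_3(q)$-class of $A$ to split into the three $\SL_3(q)$-classes represented by $A,A_1,A_2$ listed just before \lemref{isolated-l3}. \lemref{isolated-l3} already rules out $A$, so my task reduces to deciding when $A_1$ and $A_2$ are isolated.

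First I would dispose of the unipotent subcase $\alpha=1$: the equation $X^3=\alpha^{-i}=1$ is trivially solvable, so \lemref{isolated-l4} gives $A_i\sim\tr A_i$ and neither $A_1$ nor $A_2$ is isolated. Hence I may assume $\alpha$ is a \emph{primitive} cube root of $1$; in characteristic different from $2$ this forces $\alpha\neq\alpha^{-1}=\alpha^2$. In that regime $A_i$ and $A_i^{-1}$ have different characteristic polynomials and so cannot be conjugate even in $\GL_3(q)$; the isolated condition therefore collapses to the single requirement $A_i\not\sim\tr A_i$, which by \lemref{isolated-l4} amounts to $\alpha^{-i}\notin(\mathbb F_q^*)^3$.

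To convert this cubic-residue condition into a congruence on $q$, I would use that $\mathbb F_q^*$ is cyclic of order $q-1$: for any generator $g$ the primitive cube root $\alpha$ is $g^{(q-1)/3}$, so $\alpha$, $\alpha^{-1}$, $\alpha^{-2}$ (all of order $3$) lie in $(\mathbb F_q^*)^3$ exactly when $3\mid (q-1)/3$, i.e., $q\equiv 1\pmod 9$. This yields the stated equivalence: isolated matrices exist if and only if $q\equiv 1\pmod 3$ and $q\not\equiv 1\pmod 9$, and then $A_1$ and $A_2$ are precisely the two isolated classes.

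Finally, for the auxiliary claim $A_1\sim\tr A_2$, I plan to apply \propref{prop-norm-condition}(1) via determinant bookkeeping. Writing $A_i=g_iAg_i^{-1}$ with $\det g_i=\alpha^i$ and $\tr A=BAB^{-1}$ with $\det B=1$ from \lemref{isolated-l3}, a short computation expresses $\tr A_2$ as $hAh^{-1}$ with $\det h=\alpha^{-2}=\alpha$, which agrees with $\det g_1$ modulo $(\mathbb F_q^*)^3$; hence $\tr A_2$ and $A_1$ lie in the same $\SL_3(q)$-class. I do not foresee a genuinely hard step; the delicate part is keeping the two subcases $\alpha=1$ and $\alpha$ primitive cleanly separated, because in the unipotent case the failure of isolatedness comes from a trivially solvable cubic equation, whereas in the primitive case it is a genuine cubic-residue constraint on $\mathbb F_q^*$.
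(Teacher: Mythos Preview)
Your argument is correct and follows the same overall route as the paper: assemble Lemmas~\ref{isolated-l1}--\ref{isolated-l3} to reduce to $A_1,A_2$ with $\alpha$ a primitive cube root, then use \lemref{isolated-l4} and the cyclic structure of $\mathbb F_q^*$ to obtain the congruence $q\not\equiv 1\pmod 9$. You are in fact more careful than the paper in explicitly disposing of the unipotent subcase $\alpha=1$ and in spelling out why $A_i\not\sim A_i^{-1}$ via eigenvalues; the paper handles these points only by the phrase ``eigenvalue considerations''.

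The one genuine difference is the auxiliary claim $A_1\sim\tr A_2$: the paper simply exhibits the conjugating matrix $\begin{pmatrix} && -1 \\ &-\alpha^2 & \\ -\alpha && \end{pmatrix}$, whereas you deduce the conjugacy from \propref{prop-norm-condition}(1) by tracking determinants through ${}^tA_2=(g_2^{-1}B)A(g_2^{-1}B)^{-1}$ with $\det(g_2^{-1}B)=\alpha^{-2}=\alpha=\det g_1$. Your route avoids guessing the explicit matrix and makes clear \emph{why} the conjugacy holds; the paper's route is shorter and self-verifying. Both are valid.
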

\begin{proof}
From Lemmas~\ref{isolated-l1}, \ref{isolated-l2}, \ref{isolated-l3}, it follows that the isolated matrices can be found if and only if $q \equiv 1 \mod 3$. Further, the only possibilities for isolated matrices, up to conjugacy, are $A_1$ and $A_2$ (and their inverses). From Lemma~\ref{isolated-l4}, it follows that $A_1$ and $A_2$ are not conjugate to their transpose if and only if $\mathbb F_q$ does not have a primitive $9^{th}$ root of unity, which happens if and only if  $q \not\equiv 1 \mod 9$. Further, $A_1$ and $A_2$ both are not real because of the eigenvalue considerations.

One can check that $\begin{pmatrix} && -1 \\ &-\alpha^2 & \\ -\alpha && \end{pmatrix} A_1 = \tr A_2 \begin{pmatrix} && -1 \\ &-\alpha^2 & \\ -\alpha && \end{pmatrix} $.
\end{proof}

\section{Isolated matrices in $\SU_3(q)$}

We note that all hermitian forms over a finite field are equivalent, thus giving conjugate groups as unitary groups. We choose one which makes the calculations convenient, namely the following: ${\mathcal B}= \begin{pmatrix} &&1\\ &1& \\ 1&&\end{pmatrix}$ and 
$$\U_n(q)=\{A\in \GL_n(q^2) \mid \tr{A} {\mathcal B} \bar A={\mathcal B}\}.$$
Consider the norm map $N\colon \mathbb F_{q^2} \rightarrow \mathbb F_{q^2}$ which is $x \mapsto x\bar x=x^{1+q}$. Denote the norm $1$ elements by $\mathbb F_{q^2}^1$, which is of size $q+1$ and gives the determinant of $\U_n(q)$. Similar to the case of $\SL_3(q)$, we define the notion of isolated matrices here too, which is a property of the conjugacy class.
\begin{definition}\label{suisolated}
A matrix $A \in \SU_3(q)$ is called \emph{isolated} if $A$ is neither conjugate to ${\bar A}^{-1}$ nor to $A^{-1}$.
\end{definition}
\noindent The following is immediate:
\begin{lemma}
If $A \in \SU_3(q)$ is isolated, then so are $\bar A$, ${\bar A}^{-1}$, and $A^{-1}$.
\end{lemma}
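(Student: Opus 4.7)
The plan is to treat this lemma as a purely formal consequence of the definition, once two invariance properties of $\SU_3(q)$-conjugacy are in place: it is preserved by $X\mapsto X^{-1}$ and by the entrywise Galois map $X\mapsto \bar X$. The first is immediate in any group from $(gXg^{-1})^{-1}=gX^{-1}g^{-1}$. For the second, I would first verify that $X\mapsto \bar X$ restricts to an automorphism of $\SU_3(q)$: because the Hermitian form matrix $\mathcal B$ has entries in $\mathbb F_q$, applying bar to the defining identity $\tr g\,\mathcal B\,\bar g=\mathcal B$ yields $\tr{\bar g}\,\mathcal B\,g=\mathcal B$, so $\bar g\in \U_3(q)$; and $\det(\bar g)=\overline{\det g}=\bar 1 = 1$. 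Hence $B=gAg^{-1}$ in $\SU_3(q)$ forces $\bar B=\bar g\,\bar A\,\bar g^{-1}$ with $\bar g\in \SU_3(q)$, so bar descends to a well-defined permutation of the $\SU_3(q)$-conjugacy classes.

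Given these two invariances, each of the three claims reduces to a short contrapositive. To show $\bar A$ is isolated, I must rule out $\bar A \sim {\bar A}^{-1}$ and $\bar A \sim A^{-1}$ (using $\overline{\bar A}=A$). Applying bar to either hypothetical conjugacy would yield $A\sim A^{-1}$ or $A\sim {\bar A}^{-1}$ respectively, both contradicting the isolation of $A$. To show $A^{-1}$ is isolated, I must rule out $A^{-1}\sim A$ and $A^{-1}\sim \bar A$; the first is simply a restatement of $A\sim A^{-1}$, while the second, after inverting both sides, becomes $A\sim {\bar A}^{-1}$. The case of ${\bar A}^{-1}$ is handled analogously by combining bar and inversion and reducing to the same two forbidden equivalences.

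The conceptual picture is that the Klein four-group generated by bar and inversion acts on the set of $\SU_3(q)$-conjugacy classes, and the condition that $A$ is isolated is precisely the condition that the orbit of $[A]$ under this action has size four, which is manifestly an orbit invariant. There is no serious obstacle here; the only non-formal input is the one-line check that bar preserves $\SU_3(q)$, and everything else is bookkeeping.
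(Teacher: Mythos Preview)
Your proposal is correct and matches the paper's treatment: the paper states this lemma as ``immediate'' from the definition and gives no proof, and your write-up is simply an explicit unpacking of that immediacy, verifying that both $X\mapsto X^{-1}$ and $X\mapsto \bar X$ preserve $\SU_3(q)$-conjugacy (the latter via the fact that $\mathcal B$ has entries in $\mathbb F_q$) and then running the obvious contrapositives.
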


To find isolated matrices, we take a similar approach as in the case of $\SL_3(q)$. We know that for $A\in \SU_3(q)$ we have $A$ is conjugate to $\tr{\bar A^{-1}}$ in $\U_3(q)$, which is conjugate to $\bar A^{-1}$. This is because conjugacy classes of $\GL_n(q^2)$ are in one-to-one correspondence with that of $\U_n(q)$. Thus, by analyzing how conjugacy classes of $A$ taken in $\U_n(q)$ split in $\SU_n(q)$, we will narrow down our problem to a few cases. For this, we use Proposition~\ref{prop-norm-condition}.  

\begin{lemma}\label{isolated-lu1}
Let $A$ be in $\SU_3(q)$ of which the characteristic polynomial and the minimal polynomial do not satisfy $\chi_A(X) = (X - \alpha)^3 = m_A(X)$ where $\alpha^3 = 1$ and $\alpha\in \mathbb F_{q^2}^1$. Then the conjugacy class of $A$ in $\U_3(q)$ does not split in $\SU_3(q)$. Further, in these cases, $A$ is not isolated. 
\end{lemma}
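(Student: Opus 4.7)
The plan is to adapt the proof of Lemma~\ref{isolated-l1}, replacing Proposition~\ref{prop-norm-condition}(1) by its unitary analogue~(2). The central observation, already recorded in the paragraph preceding the statement, is that for every $A\in \U_3(q)$ the elements $A$ and $\bar A^{-1}$ are conjugate in $\U_3(q)$, because they are similar in $\GL_3(q^2)$ and $\U_3(q)$-conjugacy classes biject with $\GL_3(q^2)$-conjugacy classes. Consequently, if the $\U_3(q)$-class of $A$ does not split in $\SU_3(q)$, then $A$ remains conjugate to $\bar A^{-1}$ in $\SU_3(q)$, so $A$ is not isolated. By Proposition~\ref{prop-norm-condition}(2), non-splitting is equivalent to $N_A := \det(\mathcal Z_{\U_3(q)}(A)) = \mathbb F_{q^2}^1$, and the lemma reduces to verifying this equality in every case apart from the excluded one.

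I would then run a case analysis on $\chi_A(X)$, in parallel with the $\SL_3$-argument. When $\chi_A$ is irreducible of degree $3$ over $\mathbb F_{q^2}$, the centralizer $\mathcal Z_{\GL_3(q^2)}(A) = \mathbb F_{q^2}[A]^\times \cong \mathbb F_{q^6}^\times$ intersects $\U_3(q)$ in a cyclic torus of order $q^3+1$, and the determinant restricts to a surjection onto $\mathbb F_{q^2}^1$, since $(q+1)\mid (q^3+1)$ and the kernel in $\SU_3(q)$ has order $q^2-q+1$. When $\chi_A(X) = (X-a)p(X)$ with $p$ irreducible of degree $2$, the primary decomposition gives $\mathcal Z_{\U_3(q)}(A) \cong \U_1(q)\times T$ with $T$ a cyclic torus of order $q^2+1$, and the determinants again fill $\mathbb F_{q^2}^1$. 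When $\chi_A$ has at least two distinct eigenvalues in $\mathbb F_{q^2}$, the centralizer decomposes into $\U_1$- and $\U_2$-blocks and the surjectivity of $\det$ is immediate. Finally, if $\chi_A(X) = (X-\alpha)^3$ but $m_A(X) = (X-\alpha)$, then $A = \alpha I$ and $\mathcal Z_{\U_3(q)}(A) = \U_3(q)$; while if $m_A(X) = (X-\alpha)^2$, then, after a change of basis adapted to the form $\mathcal B$, the centralizer contains a full $\U_2(q)$-block, again forcing $N_A = \mathbb F_{q^2}^1$. The only uncovered possibility is the excluded one, $\chi_A = (X-\alpha)^3 = m_A$ with $\alpha^3=1$ and $\alpha\in\mathbb F_{q^2}^1$.

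The main obstacle, relative to the $\GL$-situation, is that centralizers in $\U_3(q)$ are not the full unit groups of the associated $\mathbb F_{q^2}$-algebras but their fixed points under the involution determined by $\mathcal B$, so one must track how $\mathcal B$ restricts to each primary component before writing down $\mathcal Z_{\U_3(q)}(A)$ explicitly; in particular, the degree-$3$ irreducible case requires identifying the centralizer as the norm-one subgroup of $\mathbb F_{q^6}^\times$ rather than $\mathbb F_{q^6}^\times$ itself. The key arithmetic input that makes every case go through is the elementary fact that the norm map $\mathbb F_{q^{2m}}^1 \to \mathbb F_{q^2}^1$ between norm-one tori is surjective for each $m\geq 1$, which follows from cyclicity together with the divisibility $(q+1)\mid (q^m-(-1)^m)$.
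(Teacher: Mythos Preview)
Your approach is essentially the paper's: reduce via Proposition~\ref{prop-norm-condition}(2) to showing $N_A=\mathbb F_{q^2}^1$ case by case, the only difference being that the paper outsources the centralizer computations to \cite[Section~4]{SS20} while you sketch them directly. One small slip: in the case $\chi_A(X)=(X-a)p(X)$ with $p$ irreducible quadratic, the torus $T$ has order $q^2-1$, not $q^2+1$ (it is the non-split maximal torus of $\U_2(q)$), but this is harmless since the $\U_1(q)$ factor already forces $N_A=\mathbb F_{q^2}^1$.
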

\begin{proof}
In view of Proposition~\ref{prop-norm-condition} we need to rule out all other cases of $A$ in $\SU_3(q)$ except when the characteristic polynomial is equal to the minimal polynomial given by $\chi_A(X) = (X - \alpha)^3 = m_A(X)$ where $\alpha^3 = 1$ and $\alpha\in \mathbb F_{q^2}^1$. For this, we need to show that in all those cases, $N_A=\det (\mathcal Z_{\U_3(q)}(A))$ is the whole of $\mathbb F_{q^2}^1$. The centralizer of various representatives of conjugacy classes in $\SU_3(q)$ is available in \cite[Section 4]{SS20}. Using that, we get our desired result.
\end{proof}

\begin{lemma}\label{isolated-lu2}
Let $A\in \SU_3(q)$ with the minimal polynomial $m_A(X) = (X -\alpha)^3$ for some $\alpha\in \mathbb F_{q^2}^1$. Then, the conjugacy class of $A$ in $\U_3(q)$ splits in $\SU_3(q)$ if and 
only if ($\mathbb F_{q^2}^1$ contains a primitive $3^{rd}$ root of unity) $q\equiv 2 \mod 3$. Hence, if $q\not\equiv 2 \mod 3$, the matrix $A$ is not isolated. 
\end{lemma}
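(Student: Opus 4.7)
The plan is to mimic the strategy used for $\SL_3(q)$ in Lemma~\ref{isolated-l2}, but now working with the unitary group and with the determinant taking values in $\mathbb F_{q^2}^1$ rather than $\mathbb F_q^*$. By the paragraph preceding Lemma~\ref{isolated-lu1}, any $A \in \SU_3(q)$ is conjugate to $\bar A^{-1}$ in $\U_3(q)$; hence, by Proposition~\ref{prop-norm-condition}(2), the matrix $A$ and $\bar A^{-1}$ will be $\SU_3(q)$-conjugate (so that $A$ fails to be isolated) precisely when the $\U_3(q)$-conjugacy class of $A$ does not split in $\SU_3(q)$. Thus the assertion reduces to computing the image $N_A = \det(\mathcal Z_{\U_3(q)}(A)) \subset \mathbb F_{q^2}^1$ and comparing it to $\mathbb F_{q^2}^1$.

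The key computational steps I would carry out are as follows. First, pick a convenient representative $A$ in its $\U_3(q)$-conjugacy class with $m_A(X)=(X-\alpha)^3$; writing $A = \alpha(I+N)$ with $N$ nilpotent of rank $2$, one may normalize $N$ so that $\tr{(I+N)} \mathcal{B}(I+\bar N) = \mathcal{B}$ holds (this uses $\alpha\bar\alpha=1$). Second, I would compute $\mathcal Z_{\U_3(q)}(A)$ by intersecting the polynomial algebra $\mathbb F_{q^2}[A] = \{aI + bN + cN^2 : a \in \mathbb F_{q^2}^*,\ b,c \in \mathbb F_{q^2}\}$ (which is the full centralizer in $\GL_3(q^2)$) with the unitary condition; the centralizer of such unipotent-type elements is readily available from~\cite[Section 4]{SS20} as already used in Lemma~\ref{isolated-lu1}. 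Third, I would verify that the determinant on this centralizer takes the form $\det = a^3$ with $a$ running over $\mathbb F_{q^2}^1$, so that $N_A = (\mathbb F_{q^2}^1)^3$. Finally, since $\mathbb F_{q^2}^1$ is cyclic of order $q+1$, the cube map $\phi\colon \mathbb F_{q^2}^1 \to \mathbb F_{q^2}^1$, $x \mapsto x^3$, has nontrivial kernel (equivalently, $|\mathbb F_{q^2}^1/(\mathbb F_{q^2}^1)^3| \neq 1$) if and only if $3 \mid q+1$, i.e., $q \equiv 2 \pmod 3$; this is also the condition for $\mathbb F_{q^2}^1$ to contain a primitive $3^{\mathrm{rd}}$ root of unity.

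The main obstacle will be the centralizer computation: one must carefully identify the $\mathbb F_{q^2}$-subspace of $\mathbb F_{q^2}[A]$ cut out by the hermitian relation $\tr g \mathcal{B} \bar g = \mathcal B$, and then confirm that the determinant map factors through cubes on the scalar part $a \in \mathbb F_{q^2}^1$. This is an analogue of the calculation $N_A = (\mathbb F_q^*)^3$ in Lemma~\ref{isolated-l2}, and the reference \cite[Section 4]{SS20} should allow us to bypass the bulk of the work. Once $N_A = (\mathbb F_{q^2}^1)^3$ is established, the splitting criterion and the final conclusion on non-isolation follow immediately from Proposition~\ref{prop-norm-condition}(2) together with the order computation of $\mathbb F_{q^2}^1/(\mathbb F_{q^2}^1)^3$.
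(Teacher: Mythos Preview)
Your proposal is correct and follows essentially the same approach as the paper: choose a representative with $m_A(X)=(X-\alpha)^3$, identify its $\U_3(q)$-centralizer as consisting of upper-triangular matrices with a common diagonal entry $a\in\mathbb F_{q^2}^1$ so that $N_A=(\mathbb F_{q^2}^1)^3$, and then invoke the cube map on the cyclic group $\mathbb F_{q^2}^1$ of order $q+1$ to obtain the splitting criterion $3\mid q+1$. The only cosmetic difference is that the paper writes down an explicit matrix representative $A=\begin{pmatrix}\alpha&-\alpha^2&-\frac{\alpha}{1+\alpha^2}\\ &\alpha&1\\ &&\alpha\end{pmatrix}$ and reads off the centralizer shape directly, whereas you outline the computation via $A=\alpha(I+N)$ and appeal to \cite{SS20}; either route yields the same $N_A$.
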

\begin{proof}
First we find an appropriate representative $A$ with $\chi_A(X) = m_A(X) = (X-\alpha)^3$ in $\SU_3(q)$. Fix $\alpha \in \mathbb F_{q^2}^1$ with $\alpha^3=1$. We can take $A=\begin{pmatrix} \alpha & -\alpha^2 & -\frac{\alpha}{1+\alpha^2} \\ & \alpha & 1 \\ &&\alpha \end{pmatrix}$ and verify that $\tr{A} {\mathcal B} \bar A={\mathcal B}$. Now the centralizer of $A$ in $\U_3(q)$ will have the form $\left\{\begin{pmatrix} a &*&* \\ & a &* \\ &&a \end{pmatrix}\mid a\in 
\mathbb F_{q^2}^1\right\}$ and hence $N_A= (\mathbb F_{q^2}^1)^3$. Thus, the conjugacy class of $A$ in $\U_3(q)$ splits in $|\frac{\mathbb F_{q^2}^1 }{(\mathbb F_{q^2}^1)^3}| = 1$ or $3$ classes 
in $\SU_3(q)$. 

Let $\phi \colon \mathbb F_{q^2}^1 \rightarrow \mathbb F_{q^2}^1$ be the group homomorphism given by $x\mapsto x^3$. Note that the conjugacy class of $A$ in $\U_3(q)$ splits if and only if $ |\frac{\mathbb F_{q^2}^1 }{(\mathbb F_{q^2}^1)^3}| \neq 1$ if and only if $ker(\phi) \neq 1$ if and only if $3 \mid (q+1)$. This proves the required result. 
\end{proof}

We note that $\mathbb F_{q^2}^1$ has a non-trivial $\alpha$ if and only if $q\equiv 2 \mod 3$. We fix a nontrivial element $\alpha\neq 1$ with this property which ensures the existence of a conjugacy class of $A$ in $\U_3(q)$ which splits in $\SU_3(q)$ as follows:  $A$, $A_{1} = \diag(1, \alpha, 1).A. \diag(1, \alpha, 1)^{-1} = \begin{pmatrix} \alpha &-\alpha & -\frac{\alpha}{1+\alpha^2}\\ & \alpha &\alpha \\ &&\alpha \end{pmatrix}$, and 
$A_{2}=\diag(1, \alpha^2, 1).A. \diag(1, \alpha^2, 1)^{-1} = \begin{pmatrix} \alpha &-1 & 
-\frac{\alpha}{1+\alpha^2} \\ & \alpha & \alpha^2 \\ &&\alpha \end{pmatrix}.$

Now we note that,
\begin{lemma}\label{isolated-lu3}
The matrix $A_1$ is not isolated.
\end{lemma}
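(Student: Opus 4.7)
The plan is to prove that $A_1$ is conjugate to $\bar{A_1}^{-1}$ in $\SU_3(q)$ by exhibiting an explicit conjugating element. First I would dismiss the other alternative: the eigenvalues of $A_1^{-1}$ are all $\alpha^{-1}=\alpha^2$, distinct from the eigenvalue $\alpha$ of $A_1$ (since $\alpha$ is a nontrivial cube root of unity), so $A_1$ and $A_1^{-1}$ have different characteristic polynomials and cannot be conjugate. Hence the only route to showing that $A_1$ fails to be isolated is to produce $h\in\SU_3(q)$ with $h A_1 h^{-1}=\bar{A_1}^{-1}$.

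Next I would simplify the entries. Since $\alpha^3=1$ and $\alpha\neq 1$ give $1+\alpha+\alpha^2=0$, the awkward $(1,3)$ entry $-\tfrac{\alpha}{1+\alpha^2}$ of $A_1$ collapses to $1$. Because $q\equiv 2\pmod 3$, we have $\bar\alpha=\alpha^q=\alpha^{-1}=\alpha^2$, and a direct inversion of the upper-triangular matrix $\bar{A_1}$ (using the standard $3\times 3$ upper-triangular inverse formula together with the identity $-\alpha-\alpha^2=1$) yields
$$\bar{A_1}^{-1}=\begin{pmatrix} \alpha & \alpha & 1 \\ & \alpha & -\alpha \\ & & \alpha\end{pmatrix}.$$
The crucial observation is that $\bar{A_1}^{-1}$ differs from $A_1$ (in its simplified form) only in the signs of the $(1,2)$ and $(2,3)$ entries, while the $(1,3)$ entry agrees.

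Since diagonal conjugation by $\diag(a_1,a_2,a_3)$ scales the $(i,j)$-entry by $a_i/a_j$, the required sign pattern is realised by $h=\diag(-1,1,-1)$, and this is the conjugator I would propose. Routine verification confirms $h A_1 h^{-1}=\bar{A_1}^{-1}$, that $\det(h)=1$, and that $\tr{h}\,\mathcal{B}\,\bar h=\mathcal{B}$, so $h\in\SU_3(q)$. The main (mild) obstacle is the discovery of $h$: a naive first guess such as $\diag(1,-1,1)$ has determinant $-1$ and lies only in $\U_3(q)$, so one must rescale by the central element $-1\in\mathbb F_{q^2}^1$ (equivalently, restrict the search to sign patterns whose product is $+1$) in order to land inside $\SU_3(q)$.
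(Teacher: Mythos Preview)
Your proposal is correct and follows essentially the same approach as the paper: both exhibit the explicit conjugator $h=\diag(-1,1,-1)\in\SU_3(q)$ and verify $hA_1h^{-1}=\bar{A_1}^{-1}$. Your write-up supplies additional detail (the simplification $-\tfrac{\alpha}{1+\alpha^2}=1$, the explicit computation of $\bar{A_1}^{-1}$, and the motivation for the sign pattern) that the paper leaves implicit, but the argument is the same.
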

\begin{proof} Let $g=\diag(-1, 1, -1)\in \SU_3(q)$. Then $gA_1g^{-1}=\bar{A_1}^{-1}$. Therefore, $A_1$ is not an isolated matrix. 
\end{proof}

\begin{lemma}\label{isolated-lu4}
The matrix $A$ (respectively $A_2$) is conjugate to $\tr A$ (respectively $\tr A_2$) if and only if $X^3=\alpha$ has a solution in $\mathbb F_{q^2}$.  
\end{lemma}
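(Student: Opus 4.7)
The plan is to follow the template of the $\SL_3$-analog (Lemma~\ref{isolated-l4}): I will parametrize all matrices $B \in \GL_3(\mathbb F_{q^2})$ satisfying the intertwining relation $B A = \tr{A}\, B$, impose the two additional conditions that place $B$ in $\SU_3(q)$, namely unitarity $\tr{B}\, \mathcal B\, \bar B = \mathcal B$ and $\det B = 1$, and then read off the existence criterion.

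Before doing so, I will simplify $A$ using $\alpha^2 + \alpha + 1 = 0$ (which is forced by $\alpha^3 = 1$ and $\alpha \neq 1$): in particular, $1 + \alpha^2 = -\alpha$, so the $(1,3)$-entry of $A$ becomes $1$. A direct entrywise solution of $BA = \tr{A}\, B$ is then expected to force $b_{11} = b_{12} = b_{21} = 0$ together with the symmetries $b_{31} = b_{13}$, $b_{32} = b_{23}$, $b_{22} = -\alpha^2 b_{13}$; in particular, $B$ becomes symmetric and is parametrized by three scalars $b_{13}, b_{23}, b_{33} \in \mathbb F_{q^2}$. Expanding $\det B$ along the first column (whose only nonzero entry is $b_{13}$) yields $\det B = \alpha^2\, b_{13}^3$, so $\det B = 1$ is equivalent to $b_{13}^3 = \alpha^{-2} = \alpha$.

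Next I will expand the unitarity condition, using $\bar\alpha = \alpha^{-1}$ (which holds because $\alpha \in \mathbb F_{q^2}^1$) throughout. The crucial observation is that specializing $b_{23} = b_{33} = 0$ collapses $B\,\mathcal B\,\bar B$ to $b_{13}\,\bar b_{13}\,\mathcal B$, so unitarity then reduces to $b_{13} \in \mathbb F_{q^2}^1$, independently of the cube condition $b_{13}^3 = \alpha$. Combined, a conjugator $B \in \SU_3(q)$ exists if and only if there is $b_{13} \in \mathbb F_{q^2}^1$ with $b_{13}^3 = \alpha$. A short norm argument closes the equivalence to the statement in the lemma: any $t \in \mathbb F_{q^2}$ with $t^3 = \alpha$ satisfies $N(t)^3 = N(\alpha) = 1$, and since $q \equiv 2 \pmod 3$ forces $\gcd(3, q-1) = 1$, the only cube root of $1$ in $\mathbb F_q^*$ is $1$; hence $N(t) = 1$ automatically, so $t$ already lies in $\mathbb F_{q^2}^1$.

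The argument for $A_2$ will run in parallel via the change of variable $A_2 = D A D^{-1}$ with $D = \diag(1, \alpha^2, 1)$: an intertwiner $B'$ for $A_2$ corresponds to $C = D B' D$ intertwining $A$ with $\tr A$, and tracking the determinant/unitarity twist through $D$ reshapes the cube condition into $b_{13}^3 = \alpha^{-1}$, which is solvable in $\mathbb F_{q^2}$ precisely when $X^3 = \alpha$ is (invert both sides). The main obstacle I anticipate is the bookkeeping of the coupled unitary and intertwining systems -- specifically, verifying that once $b_{13}$ satisfies the two constraints, the remaining parameters $b_{23}, b_{33}$ can indeed be chosen to land $B$ in $\SU_3(q)$; the simplifications $1 + \alpha^2 = -\alpha$ and $\bar\alpha = \alpha^{-1}$ are what keep this calculation tractable.
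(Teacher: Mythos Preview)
Your proposal is correct and follows essentially the same approach as the paper: parametrize all $B$ with $BA=\tr A\,B$, obtain the symmetric form with $b_{22}=-\alpha^2 b_{13}$, and read off $\det B=\alpha^2 b_{13}^3$, reducing to $b_{13}^3=\alpha$. Your write-up is in fact more complete than the paper's terse argument: you explicitly verify the converse direction by specializing $b_{23}=b_{33}=0$ and checking unitarity, and you supply the norm argument (using $\gcd(3,q-1)=1$ when $q\equiv 2\pmod 3$) showing that any cube root of $\alpha$ in $\mathbb F_{q^2}$ automatically lies in $\mathbb F_{q^2}^1$---both points the paper leaves implicit.
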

\begin{proof} Let $B=(b_{ij})\in \SU_3(q)$ be such that $BA =\tr A B$. A simple computation leads to $B=\begin{pmatrix} && b_{13} \\ &-b_{13}\alpha^2 & b_{23}\\ b_{13} & b_{23} & b_{33} \end{pmatrix}$ with the condition $\det(B)= b_{13}^3\alpha^2 = 1$. The required condition is equivalent to $X^3=\alpha$.
\end{proof}

\begin{theorem}\label{isolated-mu5}
Let $char(\mathbb F_{q^2})\neq 2, 3$ and $A\in \SU_3(q)$. Then, the isolated matrices in $\SU_3(q)$ exist if and only if ($\mathbb F_{q^2}^1$ contains a primitive $3^{rd}$ root of unity and $\mathbb F_{q^2}$ doesn't have $9^{th}$ primitive root of unity) $q \equiv 2 \mod 3$ and $q^2 \not\equiv 1 \mod 9$. Further, there are exactly two conjugacy classes given by the matrices $A=\begin{pmatrix} \alpha & -\alpha^2 & -\frac{\alpha}{1+\alpha^2} \\ & \alpha & 1 \\ &&\alpha \end{pmatrix}$ and $A_2= \begin{pmatrix} \alpha &-1 & 
-\frac{\alpha}{1+\alpha^2} \\ & \alpha & \alpha^2 \\ &&\alpha \end{pmatrix}$ which are isolated, where $\alpha$ is a primitive $3^{rd}$ root of unity.  
\end{theorem}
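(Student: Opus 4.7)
The proof runs parallel to that of Theorem~\ref{isolated-t5}, combining Lemmas~\ref{isolated-lu1}--\ref{isolated-lu4}, but requires one extra bridge because Definition~\ref{suisolated} is phrased in terms of $\bar A^{-1}$ while Lemma~\ref{isolated-lu4} concerns $\tr A$. First, by Lemma~\ref{isolated-lu1} any isolated matrix must satisfy $\chi_A(X)=m_A(X)=(X-\alpha)^3$ with $\alpha\in\mathbb F_{q^2}^1$ and $\alpha^3=1$. For a nontrivial such $\alpha$ to exist and for the $\U_3(q)$-class to split in $\SU_3(q)$, Lemma~\ref{isolated-lu2} forces $q\equiv 2\bmod 3$; the three split representatives are $A, A_1, A_2$, and Lemma~\ref{isolated-lu3} rules out $A_1$. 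The second defining condition $A\not\sim A^{-1}$ is automatic for $A$ and $A_2$ since their only eigenvalue is $\alpha$ whereas $A^{-1}$ has only $\alpha^{-1}=\alpha^2\ne\alpha$.

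The main step is to show that $A\sim \bar A^{-1}$ in $\SU_3(q)$ is equivalent to $X^3=\alpha$ being solvable in $\mathbb F_{q^2}$. From the defining identity $\tr A\,\mathcal B\,\bar A=\mathcal B$ one derives $\tr A=\mathcal B\,\bar A^{-1}\mathcal B^{-1}$, and since $\det\mathcal B=-1$ this gives the dictionary: $A$ is conjugate to $\bar A^{-1}$ in $\SU_3(q)$ if and only if $A$ is conjugate to $\tr A$ in $\U_3(q)$ by some element of determinant $-1$. The set of elements of $\U_3(q)$ conjugating $A$ to $\tr A$ is a coset of $\mathcal Z_{\U_3(q)}(A)$, which is non-empty because conjugacy classes in $\U_3(q)$ match those of $\GL_3(q^2)$, where $A\sim\tr A$ always. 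Its image under $\det$ is $(\mathbb F_{q^2}^1)^3$ by the centralizer computation of Lemma~\ref{isolated-lu2}. Since $q$ is odd and $q\equiv 2\bmod 3$, the order $q+1$ of $\mathbb F_{q^2}^1$ is divisible by $6$, which makes $-1$ a cube in $\mathbb F_{q^2}^1$; hence the conjugating coset meets $-1$ if and only if it meets $1$. That is, $A\sim\bar A^{-1}$ in $\SU_3(q)$ is equivalent to $A\sim\tr A$ in $\SU_3(q)$. By Lemma~\ref{isolated-lu4} this in turn is equivalent to solvability of $X^3=\alpha$ in $\mathbb F_{q^2}$; since $\alpha$ is a primitive $3^{rd}$ root of unity, any such solution is a primitive $9^{th}$ root of unity, so solvability is equivalent to $9\mid q^2-1$. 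An entirely parallel argument applies to $A_2$, so the isolated classes are exactly those of $A$ and $A_2$, and they are isolated precisely when $q^2\not\equiv 1\bmod 9$.

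The delicate point is the determinant bookkeeping in the second paragraph: the definition of isolated concerns $\bar A^{-1}$, while Lemma~\ref{isolated-lu4} concerns $\tr A$, and these two become interchangeable only after observing that $-1$ is a cube in $\mathbb F_{q^2}^1$ under the standing hypothesis $q\equiv 2\bmod 3$ with $q$ odd. Once this observation is in place, the remainder of the argument is a routine translation of the $\SL_3(q)$ case handled in Theorem~\ref{isolated-t5}.
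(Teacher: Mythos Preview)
Your argument follows the same skeleton as the paper's proof---assemble Lemmas~\ref{isolated-lu1}--\ref{isolated-lu4} and invoke the eigenvalue obstruction to reality---so in that sense the approaches coincide. The paper simply cites Lemma~\ref{isolated-lu4} and passes from ``$A\not\sim\tr A$'' directly to ``$A$ is isolated'' without comment; you correctly notice that Definition~\ref{suisolated} is phrased in terms of $\bar A^{-1}$, not $\tr A$, and you supply the missing bridge via $\tr A=\mathcal B\,\bar A^{-1}\mathcal B^{-1}$ with $\det\mathcal B=-1$ together with the observation that $-1\in(\mathbb F_{q^2}^1)^3$ whenever $6\mid q+1$. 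That step is sound and genuinely completes an argument the paper leaves implicit. One small imprecision: the determinant image of the conjugating coset is a \emph{coset} of $(\mathbb F_{q^2}^1)^3$, not necessarily $(\mathbb F_{q^2}^1)^3$ itself; but since your conclusion only uses that $-1$ and $1$ lie in the same coset of the cubes, this does not affect the validity of the proof.
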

\begin{proof}
From Lemmas~\ref{isolated-lu1}, \ref{isolated-lu2}, \ref{isolated-lu3}, it follows that the only possibility for isolated matrices up to conjugacy is $A$ and $A_2$ if they exist, which is equivalent to $q \equiv 2 \mod 3$. From Lemma~\ref{isolated-lu4}, it follows that $A$ and $A_2$ are not conjugate to their transpose if and only if $\mathbb F_{q^2}$ does not have a primitive $9^{th}$ root of unity which is if and only if  $q^2 \not\equiv 1 \mod 9$. Further, $A$ and $A_2$ are not real because of the eigenvalue considerations.
\end{proof}

\section{Chirality in $\G_2(q)$}\label{G2q}

In this section, we will focus on the exceptional group $\G_2(q)$ and show that it is chiral. We assume that $char(\mathbb F_q)\neq 2, 3$. We note that all automorphisms of $\G_2(q)$ are either inner or field automorphisms in this case. Also, since $\G_2(q)$ is a finite simple group, it is enough to work with automorphisms (see~\cite{Lu14}). If a group $G$ is chiral then there exists an $x$ in $G$  (said to be a \emph{witness of chirality}) such that $\varphi(x)\neq x^{-1}$ for all $\varphi\in \Aut(G)$. Further, note that only automorphisms are either inner or field automorphisms. Thus our problem reduces to finding non-real elements which are not fixed by field automorphisms. We aim to characterize such elements. First, we determine non-real elements. 

Let $k=\mathbb F_q$ be a finite field of characteristic $\neq 2,3$. Real conjugacy classes are discussed in~\cite{ST05, ST08} and are classified in terms of strongly real (which are a product of two involutions) classes. We briefly recall some of it. Let $\mathfrak C$ be an octonion (also called Cayley) algebra over $k$. It is a non-commutative, non-associative algebra of dimension $8$ over the field $k$. Since $k$ is finite, it is unique up to isomorphism, namely the split octonion algebra. The groups of type $\G_2$ over $k$ are the group of $k$-algebra automorphisms of $\mathfrak C$. Again, over a finite field, the group of type $\G_2$ is split, hence unique up to isomorphism, and is given by the algebra automorphisms of split octonion algebra. Thus, we will take $\G_2(q) = \Aut(\mathfrak C)$ for our purposes. For our work here, we need to find non-real elements in this group. We have the following:
\begin{theorem}\label{maintheorem}
Let $char(\mathbb F_q)\neq 2, 3$. The group $\G_2(q)$ has non-real conjugacy classes if and only if $q$ satisfies one of the following cases:
\begin{enumerate}
\item The field $\mathbb F_q$ has a primitive $3^{rd}$ root of unity and doesn't have a primitive $9^{th}$ root of unity, i.e., $q \equiv 1 \mod 3$ and $q \not\equiv 1 \mod 9$.
\item The field $\mathbb F_{q^2}^1$ contains a primitive $3^{rd}$ root of unity and $\mathbb F_{q^2}$ doesn't have $9^{th}$ primitive root of unity, i.e., $q \equiv 2 \mod 3$ and $q^2 \not\equiv 1 \mod 9$.
\end{enumerate}
\end{theorem}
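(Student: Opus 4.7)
The plan is to combine the classification of real classes in $\G_2(q)$ due to Singh--Thakur \cite{ST05, ST08} with Theorems \ref{isolated-t5} and \ref{isolated-mu5}, using the ``isolated matrix'' condition as the precise obstruction to reality inside $\G_2(q)$.

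First, I would use the classification of (strongly) real conjugacy classes of $\G_2(q)$ from \cite{ST05, ST08}. Their analysis shows that all unipotent classes are real, and that the potentially non-real conjugacy classes must have Jordan decomposition supported in one of the two subsystem subgroups of type $A_2$, namely $\SL_3(q)$ (long-root $A_2$) and $\SU_3(q)$ (short-root $A_2$); semisimple elements in the remaining maximal tori of $\G_2(q)$ are inverted by a Weyl-group element that is realized rationally and are therefore automatically real.

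Second, I would identify the precise extra conjugations gained by passing from a subsystem subgroup to $\G_2(q)$. The normalizer of $\SL_3(q)$ inside $\G_2(q)$ has the form $\SL_3(q)\rtimes \langle \tau \rangle$, where $\tau$ realizes the graph automorphism $g \mapsto \tr g^{-1}$ of $\SL_3(q)$. Consequently, an element $g\in \SL_3(q)$ is real in $\G_2(q)$ if and only if it is $\SL_3(q)$-conjugate to either $g^{-1}$ or $\tr g$, which is precisely the negation of Definition \ref{slisolated}. The parallel analysis for $\SU_3(q)$, with the normalizer realizing the analogue $g \mapsto \bar g^{-1}$, shows that non-reality in $\G_2(q)$ of an element of the $\SU_3(q)$ subgroup is equivalent to being isolated in the sense of Definition \ref{suisolated}.

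Third, I would invoke Theorems \ref{isolated-t5} and \ref{isolated-mu5}: isolated matrices exist in $\SL_3(q)$ exactly when $q \equiv 1 \pmod 3$ and $q \not\equiv 1 \pmod 9$, which produces case $(1)$; isolated matrices exist in $\SU_3(q)$ exactly when $q \equiv 2 \pmod 3$ and $q^2 \not\equiv 1 \pmod 9$, which produces case $(2)$. Since every $q$ coprime to $3$ lies in exactly one of the two residue classes modulo $3$, these two cases are complementary and together yield the stated dichotomy.

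The main obstacle I expect lies in the second step: rigorously pinning down the normalizers of the $A_2$ subsystem subgroups inside $\G_2(q)$ together with the induced fusion of conjugacy classes, in particular verifying that no conjugation by an element lying outside these subsystem subgroups can invert an isolated element. This fusion analysis is essentially the content of \cite{ST05, ST08}; once it is in hand, the arithmetic characterization of isolated matrices obtained in Sections $4$ and $5$ immediately translates into the desired theorem.
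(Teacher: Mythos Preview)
Your proposal is correct and follows essentially the same route as the paper's proof. The paper phrases the reduction via the octonion algebra model of $\G_2(q)$ (quaternion subalgebras and quadratic \'etale subalgebras $L$ with $\Aut(\mathfrak C/L)\cong \SL_3(q)$ or $\SU_3(q)$) rather than in your subsystem-subgroup language, but the logical skeleton is identical: invoke \cite{ST05,ST08} to reduce non-reality to the isolated-matrix criterion in $\SL_3(q)$ or $\SU_3(q)$, then apply Theorems~\ref{isolated-t5} and~\ref{isolated-mu5}.
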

\begin{proof}
Let $k=\mathbb F_q$ and $\mathfrak C$ be the octonion algebra over $k$. Let $t \in \Aut(\mathfrak C)$. If $t$ is unipotent, then by Lemma A.1.1 in~\cite{ST08}, it is strongly real and hence real. Thus, we may assume that we are dealing with $t$ non-unipotent. Now, if $t$ is not unipotent, then by Lemma A.1.2 in~\cite{ST08} either $t$ leaves a quaternion subalgebra invariant or fixes a quadratic \'{e}tale subalgebra $L$ of $\mathfrak C$ pointwise. If $t$ leaves a quaternion subalgebra invariant, then $t$ is strongly real and hence real. This leaves us with the case that $t$ fixes a quadratic \'{e}tale subalgebra $L$ of $\mathfrak C$ pointwise. The algebra $L \cong k\times k$ or $L$ is a quadratic field extension. In the first case $t\in \Aut(\mathfrak C/L) \cong \SL_3(k)$ and in the second case $t\in \Aut(\mathfrak C/L) \cong \SU_3(L)$. 

Let us denote the image of $t\in \Aut(\mathfrak C)$ under the map $\Aut(\mathfrak C/L) \cong \SL_3(k)$ as $A$ when $L\cong k \times k$. Similarly, Let us denote the image of $t\in  \Aut(\mathfrak C)$ under the map $\Aut(\mathfrak C/L) \cong \SU_3(L)$ when $L$ is a quadratic extension of $k$ as $A$. Note that $L\cong \mathbb F_{q^2}$ in our case. Since we are dealing with a finite field, there is a unique hermitian form to get the unitary group (up to conjugacy).  

Now, we have $t$ a non-unipotent, and further suppose characteristic polynomial $\chi_A(X) \neq m_A(X)$, the minimal polynomial. Then, from the proof of Theorem A.1.4~\cite{ST08}, $t$ is strongly real. More generally, if the characteristic polynomial of $A$ is reducible and the minimal polynomial of $A$ is not of the form $(X - \alpha)^3$, then $t$ is strongly real and hence real (see Neumann~\cite{N90}, Satz 6 and Satz 8).

Thus, we are left with the two cases (i) $\chi_A(X) =  m_A(X) = (X-\alpha)^3$, and (ii) $\chi_A(X) =  m_A(X)$ irreducible of degree three. These are the possible cases where we could have non-real 
elements, which we analyze further. We recall the following from~\cite{ST08} Proposition A.2.2 and A.3.2: 
\begin{enumerate}
\item when $L\cong k \times k$, $t$ is real in $\Aut(\mathfrak C)$ if and only if $A$ is conjugate to either $A^{-1}$ or $\tr A$ in $\SL_3(k)$. 
\item When $L$ is a quadratic field extension, $t$ is real in $\Aut(\mathfrak C)$ if and only if either $A$ or $\bar A$ is conjugate to $A^{-1}$ in $\SU_3(L)$.
\end{enumerate} 
Thus, our problem of finding non-real elements reduces to finding isolated matrices in $\SL_3(q)$ and $\SU_3(q)$. This is done in Theorem~\ref{isolated-t5} and Theorem~\ref{isolated-mu5}, which completes the proof.
\end{proof}

Given Lemma~\ref{techlemma} and Theorem~\ref{maintheorem} by noting that the non-real elements we get are not fixed by field automorphisms, we conclude the following:
\begin{corollary}\label{chiral-g2}
Let $char(\mathbb F_q)\neq 2, 3$. The group $\G_2(q)$ is chiral if and only if $q$ satisfies one of the following cases:
\begin{enumerate}
\item the field $\mathbb F_q$ has a primitive $3^{rd}$ root of unity and doesn't have a primitive $9^{th}$ root of unity, i.e., $q \equiv 1 \mod 3$ and $q \not\equiv 1 \mod 9$.
\item $\mathbb F_{q^2}^1$ contains a primitive $3^{rd}$ root of unity and $\mathbb F_{q^2}$ doesn't have $9^{th}$ primitive root of unity, i.e, $q \equiv 2 \mod 3$ and $q^2 \not\equiv 1 \mod 9$.
\end{enumerate}
\end{corollary}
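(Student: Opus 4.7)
The ``only if'' direction is immediate: if neither condition (1) nor (2) holds, then by Theorem~\ref{maintheorem} every element of $\G_2(q)$ is real, so $\G_2(q)$ is achiral by Corollary~\ref{realch}.

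For the ``if'' direction my plan is to construct a witness of chirality from the non-real conjugacy classes produced by Theorem~\ref{maintheorem}, and then invoke Lubotzky's characterisation of word images in finite simple groups~\cite{Lu14}: in a non-abelian finite simple group $G$, a subset $S\ni 1$ equals $w(G)$ for some word $w$ if and only if $S$ is $\Aut(G)$-invariant. It therefore suffices to exhibit $x\in \G_2(q)$ whose $\Aut(\G_2(q))$-orbit does not contain $x^{-1}$; then $\{1\}\cup \Aut(\G_2(q))\cdot x$ is $\Aut$-invariant, equals the image of some word, and is not closed under inversion. Since $\mathrm{char}(\mathbb F_q)\neq 2,3$ rules out the graph automorphism in type $\G_2$, we have $\Aut(\G_2(q))=\mathrm{Inn}(\G_2(q))\rtimes \langle F\rangle$ with $F$ the Frobenius, so non-reality of $x$ kills the inner-automorphism contribution and only the action of $F$ needs to be controlled.

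Concretely, I would take $x=t_{A_1}$, the element of $\G_2(q)$ attached to the isolated matrix $A_1$ of Theorem~\ref{isolated-t5} in case (1) (and the analogous $\SU_3(q)$-representative from Theorem~\ref{isolated-mu5} in case (2)). The Frobenius $F$ acts on $A_1$ entry-wise, so it sends the eigenvalue $\alpha$ (a primitive $3$rd root of unity) to $\alpha^p$. When $p\equiv 1\mod 3$, every power of $F$ fixes $\alpha$ and hence fixes $A_1$ pointwise, so $F$ trivially fixes the $\G_2$-class of $x$. When $p\equiv 2\mod 3$ (which forces $n$ to be even so that $q\equiv 1\mod 3$), $F$ moves $A_1$ to a matrix with eigenvalue $\alpha^{-1}$; a splitting computation using Proposition~\ref{prop-norm-condition} together with the diagonal-conjugation parametrisation that produced $A_1,A_2$ shows that this image lies in the $\SL_3(q)$-class of $A_2^{-1}$. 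However, the $\G_2(q)$-class of $t_{A_1}$ already contains $t_{A_2^{-1}}$: this is the shadow of the relation $A_1\sim_{\SL_3}\tr{A_2}$ noted in Theorem~\ref{isolated-t5} combined with the swap of the two factors of the split étale subalgebra $L\cong k\times k$, which at the $\G_2$-level identifies $[t_A]$ with $[t_B]$ whenever $A\sim_{\SL_3}\tr{B^{-1}}$ (implicit in the reality criterion used in the proof of Theorem~\ref{maintheorem}). Hence in every sub-case $F$ preserves the $\G_2(q)$-class of $x$ rather than mapping it to the inverse class, so $x$ is a witness of chirality.

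The main obstacle is the bookkeeping in the $p\equiv 2\mod 3$ sub-case: one has to track carefully how Frobenius shuffles the $\SL_3(q)$-classes of isolated matrices with eigenvalue $\alpha$ into those with eigenvalue $\alpha^{-1}$, pass through the two-to-one identification that collapses $\SL_3$-classes into $\G_2$-classes, and verify that the resulting pair of non-real $\G_2(q)$-classes is preserved and not swapped by $F$. The proof of case (2) will be entirely parallel, replacing $\SL_3(q)$ by $\SU_3(q)$ and invoking Theorem~\ref{isolated-mu5} together with the $\SU_3$-reality criterion from~\cite{ST08}.
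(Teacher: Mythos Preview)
Your approach is the same as the paper's: the paper disposes of the ``only if'' direction exactly as you do, and for the ``if'' direction it invokes Lubotzky's theorem together with the assertion that the non-real classes produced by Theorem~\ref{maintheorem} are not carried to their inverse classes by field automorphisms. The paper's entire argument for the latter is the single clause ``by noting that the non-real elements we get are not fixed by field automorphisms'' (a phrasing which, read literally, is the wrong condition and almost certainly a slip for ``are not sent to their inverses''); you are supplying the verification the paper omits.

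Your case split on $p\bmod 3$ and the mechanism you describe are correct. In particular your key claim in the delicate sub-case $p\equiv 2\bmod 3$ checks out: writing $B,B_1,B_2$ for the three $\SL_3(q)$-classes with minimal polynomial $(X-\alpha^{-1})^3$, one finds $F(A_1)=B_1$, and a determinant-coset computation via Proposition~\ref{prop-norm-condition} shows that $A^{-1}\sim_{\SL_3} B$ (the conjugating matrix has determinant $-1=(-1)^3\in(\mathbb F_q^*)^3$), hence $A_2^{-1}$ lands in the coset of $\alpha^2$, the same as $B_1$. So $F(A_1)\sim_{\SL_3}A_2^{-1}$, and since the swap of the two factors of $L\cong k\times k$ realises $[t_A]_{\G_2}=[t_{\tr A^{-1}}]_{\G_2}$, Theorem~\ref{isolated-t5}'s relation $A_1\sim_{\SL_3}\tr A_2$ gives $[t_{A_1}]_{\G_2}=[t_{A_2^{-1}}]_{\G_2}$; thus $F$ fixes the $\G_2$-class rather than swapping it with its inverse. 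You should simply carry out this coset computation explicitly rather than leaving it as ``a splitting computation\dots shows'', since it is the crux of the argument and the paper does not do it for you.
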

\noindent Finally, we also note that, although we get two pairs of isolated matrices in the above cases, they give only one pair of non-real classes in $\G_2(q)$.


\end{document}